\documentclass[11pt]{amsart}

\usepackage{amssymb,amsfonts}
\usepackage{hyperref}
\usepackage{mdwlist}
\usepackage{amssymb,textcomp}
\usepackage[dvipsnames]{xcolor}
\usepackage{mathtools}
\usepackage{tikz}
\usetikzlibrary{positioning}
\usepackage{enumerate}
\usepackage{enumitem}

\usepackage[utf8]{inputenc}
\usepackage[T1]{fontenc}

\usepackage[mathscr]{eucal}

\usepackage{amssymb,amsfonts}
\usepackage{hyperref}
\usepackage{mdwlist}
\usepackage{amssymb,textcomp}
\usepackage[dvipsnames]{xcolor}
\usepackage{mathtools}
\usepackage{tikz}
\usetikzlibrary{positioning}
\usepackage{enumerate}
\usepackage{enumitem}

\usepackage[utf8]{inputenc}
\usepackage[T1]{fontenc}

\usepackage[mathscr]{eucal}

\usepackage{hyperref}
 \hypersetup{
     colorlinks=true,
     linktocpage=true,
     linkcolor=red,
     filecolor=blue,
     citecolor = blue,
     urlcolor=cyan,
     }

\usepackage[a4paper, twoside=false, vmargin={2cm,3cm}, includehead]{geometry}

\usepackage{comment}
\newtheorem{lemma}{Lemma}[section]
\newtheorem{theorem}[lemma]{Theorem}
\newtheorem*{theorem*}{Theorem}

\newtheorem{corollary}[lemma]{Corollary}

\newtheorem{proposition}[lemma]{Proposition}
\newtheorem*{proposition*}{Proposition}

\newtheorem{conjecture}[lemma]{Conjecture}

\newtheorem*{problem*}{Problem}

\theoremstyle{definition}

\newtheorem*{claim*}{Claim}

\newtheorem{definition}[lemma]{Definition}

\newtheorem{example}[lemma]{Example}

\newtheorem*{remark}{Remark}
\newtheorem*{remarks}{Remarks}



\DeclareMathOperator*{\E}{\mathbb{E}}

\newcommand{\C}{{\mathbb C}}

\newcommand{\N}{{\mathbb N}}
\renewcommand{\P}{{\mathbb P}}
\newcommand{\Q}{{\mathbb Q}}
\newcommand{\R}{{\mathbb R}}
\renewcommand{\S}{\mathbb{S}}
\newcommand{\T}{{\mathbb T}}
\newcommand{\Z}{{\mathbb Z}}






\newcommand{\e}{\mathrm{e}}

\newcommand{\norm}[1]{\left\Vert #1\right\Vert}

\newcommand{\floor}[1]{{\left \lfloor #1 \right \rfloor}}

\usepackage[dvipsnames]{xcolor}
\newcommand{\FM}[1]{{\color{violet}{{#1}}}}
\newcommand{\AK}[1]{{\color{blue}{{#1}}}}

\title[Furstenberg systems of certain sequences of superpolynomial growth]{Furstenberg systems of certain sequences of superpolynomial growth}
\author{Andreu Ferr\'e Moragues and Andreas Koutsogiannis}

\address[Andreu Ferr\'e Moragues]{Department of Mathematical Analysis and Applied Mathematics, Faculty of Mathematics, Complutense University of Madrid, 28040 Madrid, Spain
}
\email{anferre@ucm.es}

\address[Andreas Koutsogiannis]{
Department of Mathematics, Aristotle University of Thessaloniki, Thessaloniki 54124, Greece}
\email{akoutsogiannis@math.auth.gr}

\thanks{The research project is implemented in the framework of H.F.R.I call ``3rd Call for H.F.R.I.’s
Research Projects to Support Faculty Members \& Researchers'' (H.F.R.I. Project Number: 24979).}

\subjclass[2020]{Primary: 37A44; Secondary: 28D05, 11K06, 11L03}

\keywords{Furstenberg systems, functions of superpolynomial growth, equidistribution.} 

\begin{document}

\maketitle
\begin{abstract}
We give examples of sequences defined by smooth functions of intermediate growth, and we study the Furstenberg systems that model their statistical behavior. In particular, we show that the systems are Bernoulli. We do so by studying exponential sums that reflect the strong equidistribution properties of said sequences. As a by-product of our approach, we also get some convergence results.
\end{abstract}

\tableofcontents

\section{Introduction}
The seminal proof of Szemer\'edi's theorem using ergodic theory due to Furstenberg \cite{Fu77} initiated a new method to study problems in combinatorics: through the use of his, now classical, correspondence principle, one can translate the problem of finding patterns in large subsets $E \subseteq \Z$ into recurrence properties on a measure preserving system $(X,\mathcal{B},\mu,T)$ for a given set $A \in \mathcal{B}$ with $\mu(A)>0$.

It was soon realized that one can actually use a slight variant of the aforementioned principle to study statistical properties of bounded sequences $a: \N \to \C$ (and more generally, properties of finite collections of bounded sequences $a_1,\dots,a_r: \N \to \C$) via convenient dynamical models.

For example (cf. \cite[Theorem~3.13]{Fu81}), the statistical behavior of the sequence $(n^2\alpha)_n$ on the torus $\mathbb{T}$ can be modeled via the measure preserving system $(\T^2, m_{\text{Leb}}, S)$, where $S: \T^2 \to \T^2$ is given by
\[
S(x,y):=(x+\alpha, y+x), \quad x,y \in \T.
\]
Indeed, given $f \in C(\T),$ in order to study the statistical properties of the sequence $a(n):=f(n^2\alpha),$ we have
\[
\lim_{N \to \infty} \frac{1}{N}\sum_{n=1}^N \prod_{j=1}^{r} a(n+h_i) = \int_{\T^2} \prod_{j=1}^{r} g(S^{h_i}(x,y)) \ dm_{\text{Leb}}(x,y) ,
\]
where $g(x,y)=f(y)$, and the equality above holds for all $r \in \N$ and all $h_1,\dots,h_{\ell} \in \Z$.
This is an example of what we will call a \emph{Furstenberg system} (see Definition~\ref{D: FS} below). The study of such systems is especially important and has received a lot of attention recently in view of the existing connection with problems in analytic number theory. More precisely, one can see Chowla's conjecture as the claim that the Liouville function $\lambda$ can be modelled with a Bernoulli system, or that the M\"obius function $\mu$ can be modeled by a direct product of a procyclic system and a Bernoulli system (see \cite{EKLR17, Sar11, Sar12}). What is known currently are some partial results; we refer the reader to \cite{FH18, FH21, GLR21}.

There is also recent work on the topic of Furstenberg systems, showcasing the importance of this topic in recent years (see for example \cite{FLR25, KKLR23}). These show the attention that Furstenberg systems have received, highlighting their relevance in the development of ergodic theory, especially taking into account, as we mentioned, its very important and strong connections with number theory (see also \cite{L23} for its connection with Sarnak's conjecture),  although they are of interest for its own sake as well.

Expanding on the aforementioned connection, Frantzikinakis \cite{Fra22}, the work of whom is the main source of inspiration for the present article, dealt with iterates that come from smooth functions in a Hardy field (in fact, in the examples under consideration, the functions are all of the $C^{\infty}(\R_+)$ class). In particular \cite[Theorem~1.1]{Fra22} determines the structure of Furstenberg systems with respect to sequences of polynomial growth rate.\footnote{ By this we mean that the sequence grows slower than some $(n^k)_n,$ $k\in\N$.}

In this article, we will go in a slightly different but also very interesting (and less explored) direction: we will look into iterates of the form $(n^{\log^c n})_n$ and $(\alpha\lfloor n^{\log^c n}\rfloor)_n,$ for $0<c<1/2$ and $\alpha\in\R\setminus\Z.$ As is easy to check, the sequence $(n^{\log^c n}=\exp(\log^{c+1}n))_n$ grows faster than any polynomial, but it is of subexponential growth because of the logarithm. In contrast with Frantzikinakis's results, the systems we obtain are all Bernoulli (Theorem~\ref{mainthm}). This latter result follows through the study of exponential sums and a strong equidistribution result on a multidimensional torus of shifts of the original sequence (Theorem~\ref{SPexpsum}) which also helps us derive some corollaries with a flavor of topological dynamics (Corollary~\ref{topcor intro}), and a norm convergence result for systems with commuting transformations (Theorem~\ref{T: vN}). 

Dealing with sequences of intermediate growth has remained a widely open topic (cf. the open problems in \cite{Fr16} and \cite{Fr21} for such sequences of iterates) precisely because traditional approaches and many crucial tools that one usually uses to show more general convergence (and consequently recurrence results), such as the ubiquitous van der Corput trick, can no longer be used. Hence, one has to consider alternative approaches, e.g., \cite[Theorem~1.10 (i)]{Fr21}, where Frantzikinakis showed that for distinct $c_1,\ldots, c_r\in (0,1/2)$ the sequences $(\floor{n^{\log^{c_1}n}})_n, \ldots, (\floor{n^{\log^{c_r}n}})_n$, are jointly ergodic for every ergodic nilsystem. He achieved this via the main result on joint ergodicity of sequences (\cite[Theorem 1.1]{Fr21}) which allows one to prove the property of joint ergodicity by showing that said sequences enjoy the equidistribution and seminorm estimates properties. Because in his setup, the underlying system is a nilsystem, the seminorm estimates trivally hold, and because of the fact that the $c_i$'s are distinct, he can trivially apply \cite[Theorem~2]{bp} to deduce the required equidistribution.

On the other hand, one should be careful when aiming for more general results. As it turns out, the choice and properties of the sequence of intermediate growth in this paper is important, as \cite[Theorem 1.6]{Boshernitzan-equidistribution} shows that even equidistribution might fail when one considers general Hardy field iterates of intermediate growth.


\subsection{Notation}
We will often make use of the following notation:
For $N \in \N$ we let $[N]:=\{1,\dots,N\}$ and given a bounded sequence $a: \N \to \C$ we write, for any bounded non-empty subset $A$ of $\N$
\[
\E_{n \in A} a(n):=\frac{1}{|A|} \sum_{n \in A} a(n).
\]
Similar notation is used for sequences of measures. Next, given $a, b : \R_+ \to \R$ we write
$a(t) \ll b(t)$ if there exists some absolute constant $C>0$ such that $|a(t)| \leq C |b(t)|$ for all large enough $t \in \R$. For a real number $t,$ let $\lfloor t\rfloor$ denote the integral part of $t,$ that is, the greatest
integer less than $t$; let $\{t\} = t - \lfloor t\rfloor$ be the fractional part of $t.$ Finally, for $t \in \R$ we put $e(t):=e^{2\pi it}$.

\subsection{Main results and applications}
We now state the main results that we obtain for intermediate growth sequences. 

 \emph{Throughout the rest of the section, for $0<c<1/2,$ we let $G(x):=x^{\log^c x}.$}

First, we get a characterization of associated Furstenberg systems in the following theorem. We note that in this paper, we will say a measure preserving system is Bernoulli if it is of the form $(X^{\Z},\otimes_{n \in \Z} \mu, S),$ where $(X,\mu)$ is a probability space, and $S: X^\Z \to X^\Z$ is the shift given by $S(x_n)_{n \in \Z}:=(x_{n+1})_{n \in \Z}$ (in particular, the trivial one-point Bernoulli system is allowed).
\begin{theorem}\label{mainthm}
Let $0<c<1/2$. Then, the sequences
 $a(n):=e(G(n))$ and $b(n):=e(\alpha \lfloor G(n) \rfloor),$ $\alpha \in \R \setminus \Z,$
have unique Bernoulli Furstenberg systems.
\end{theorem}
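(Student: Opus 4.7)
The plan is to reduce the Bernoulli property to explicit exponential sum bounds and then invoke Theorem~\ref{SPexpsum}. For any bounded $\S^1$-valued sequence $c$, a Fourier/Stone--Weierstrass argument shows that the Furstenberg system of $c$ is unique and Bernoulli with marginal $\nu$ exactly when, for every $k\in\N$, every distinct $h_1,\dots,h_k\in\Z$, and every integer tuple $(m_j)\in\Z^k$,
\[
\lim_{N\to\infty}\E_{n\in[N]}\prod_{j=1}^{k}c(n+h_j)^{m_j}=\prod_{j=1}^{k}\Bigl(\lim_{N\to\infty}\E_{n\in[N]}c(n+h_j)^{m_j}\Bigr),
\]
with uniqueness following automatically from convergence along $\N$. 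So everything is reduced to controlling exponential sums built from shifts of $G$.

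For the sequence $a(n)=e(G(n))$, the correlation is the pure exponential sum
\[
\E_{n\in[N]}e\Bigl(\sum_{j=1}^{k}m_j\,G(n+h_j)\Bigr),
\]
and Theorem~\ref{SPexpsum} directly supplies the strong equidistribution on $\T^k$ needed to force this average to zero whenever $(m_j)\neq\mathbf{0}$. The marginal is Haar on $\S^1$ and all joint moments factor trivially, giving the unique Bernoulli Furstenberg system.

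For $b(n)=e(\alpha\lfloor G(n)\rfloor)$, I would use the identity $\alpha\lfloor G(n)\rfloor=\alpha G(n)-\alpha\{G(n)\}$ to write
\[
b(n)^m = e\bigl(\alpha m\,G(n)\bigr)\cdot\phi_m\bigl(G(n)\bigr),\qquad \phi_m(t):=e\bigl(-\alpha m\{t\}\bigr),
\]
and approximate the $1$-periodic but discontinuous $\phi_m$ in $L^1(\T)$ by a Fej\'er truncation $\phi_m^{(L)}(t)=\sum_{|\ell|\leq L}(1-|\ell|/L)\,\widehat{\phi}_m(\ell)\,e(\ell t)$. Expanding the product of shifts decomposes the correlation into a weighted sum of averages of the form
\[
\E_{n\in[N]}e\Bigl(\sum_{j}(\alpha m_j+\ell_j)\,G(n+h_j)\Bigr),
\]
and Theorem~\ref{SPexpsum}, applied with these \emph{real} coefficients, annihilates every such average unless $(\alpha m_j+\ell_j)_j\equiv\mathbf{0}$. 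Collecting the surviving configurations reproduces exactly the expected Bernoulli factorization: for $\alpha$ irrational only the all-zeros tuple survives, yielding Haar marginal on $\S^1$; for $\alpha=p/q$ in lowest terms with $q\geq 2$ the surviving configurations are indexed by the multiples of $q$, yielding the uniform measure on the $q$-th roots of unity (both are allowed Bernoulli marginals under the paper's definition).

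The central obstacle is Theorem~\ref{SPexpsum} itself: since $G(x)=x^{\log^c x}$ is of intermediate growth, the van der Corput trick and its relatives do not apply, so one must bound the exponential sums by hand, carefully exploiting the fact that the derivatives of $G$ decay slowly relative to $G$ itself so that shifts separate at the right rate. Granted a sufficiently uniform version of that theorem, the only remaining subtlety in the $b$-case is tuning the Fej\'er truncation parameter $L$ slowly with $N$ so that the $L^1$-approximation error is dominated by the quantitative decay of the exponential sums; this is routine provided the equidistribution bounds are uniform in the coefficient tuple.
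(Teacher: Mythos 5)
Your proposal is correct, and its skeleton is the same as the paper's: reduce the Bernoulli property to the vanishing (respectively, factorization) of shifted multiplicative correlations via a Stone--Weierstrass argument (this is the content of Proposition~\ref{Fsystemchar} and Corollary~\ref{cor: FsysBern}), and then kill those correlations with Theorem~\ref{SPexpsum}; the treatment of $a(n)=e(G(n))$ is identical. Where you genuinely diverge is the bracket case: the paper proves Proposition~\ref{P: Weyl-type} by lifting to the equidistribution of the extended tuple $\bigl((\alpha_j G(n+h_j), G(n+h_j))_j\bigr)_n$ on a higher-dimensional torus and testing against Riemann-integrable functions (with a separate explicit computation using indicators of the cells $E_{j_1,j_2}$ when the coefficients are rational), whereas you expand $e(\alpha m\lfloor t\rfloor)=e(\alpha m t)\,e(-\alpha m\{t\})$ and approximate the sawtooth factor by Fej\'er means, after which only exponential sums with real coefficients $\alpha m_j+\ell_j$ remain and Theorem~\ref{SPexpsum} (goodness with arbitrary real coefficients) applies; the surviving configurations correctly reproduce the Haar marginal for irrational $\alpha$ and the uniform measure on $q$-th roots of unity for $\alpha=p/q$. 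The approaches are essentially equivalent in strength; yours is a slicker bookkeeping of the rational/irrational dichotomy, while the paper's avoids Fourier truncation at the cost of a case analysis. One small correction: your closing caveat about tuning $L$ with $N$ and needing bounds uniform in the coefficient tuple is unnecessary. For fixed $L$ only finitely many tuples $(\alpha m_j+\ell_j)_j$ occur, so the qualitative statement of Theorem~\ref{SPexpsum} handles the main term, and the error term $\E_{n\in[N]}\bigl|\phi_{m_j}-\phi_{m_j}^{(L)}\bigr|\bigl(G(n+h_j)\bigr)$ is controlled not by $L^1$-smallness alone but by the fact that this error function is bounded, $1$-periodic and Riemann integrable together with the equidistribution of $(G(n+h_j))_n$ mod $1$ (again from Theorem~\ref{SPexpsum}); taking $N\to\infty$ first and then $L\to\infty$ closes the argument with no quantitative input.
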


Because of the special disjointness properties that Bernoulli systems enjoy, Theorem~\ref{mainthm} allows us to obtain the following corollary that has a flavor of topological dynamics. We will use $h(T)$ to denote the topological entropy of a topological dynamical system $(X,T)$.

\begin{corollary}\label{topcor intro}
Let $(X,T)$ be a topological dynamical system with $h(T)=0$, $0<c<1/2$. Then, for the sequences $a(n):=e(G(n))$ and $b(n):=e(\alpha\lfloor G(n)\rfloor),$ $\alpha\in\mathbb{R}\setminus \mathbb{Z},$  for all $x \in X$ we have
\begin{equation}\label{topcor}
    \lim_{N \to \infty} \E_{n \in [N]} a(n)f(T^nx)=0\;\;\text{and}\;\;\lim_{N \to \infty} \E_{n \in [N]} b(n)f(T^nx)=0.
\end{equation}
\end{corollary}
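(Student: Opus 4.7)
The plan is to derive the corollary from Theorem~\ref{mainthm} by combining it with Furstenberg's classical disjointness of Bernoulli systems from zero-entropy systems. Let $\phi(n)$ stand for either $a(n)$ or $b(n)$, and fix $x\in X$, $f\in C(X)$. Since $|\phi|\le 1$, the full orbit $\widetilde\phi:=(\phi(n))_{n\in\Z}$ lies in the compact metric space $K^{\Z}$, where $K:=\{z\in\C:|z|\le 1\}$. It suffices to prove that every subsequential limit of $\E_{n\in[N]}\phi(n)f(T^n x)$ equals $0$. Given any such subsequence $N_k\to\infty$, by weak-$*$ compactness of probability measures on the compact space $K^{\Z}\times X$ we pass to a further subsequence along which
\[
\sigma_k:=\E_{n\in[N_k]}\delta_{(S^n\widetilde\phi,\,T^n x)}
\]
converges weakly to an $S\times T$-invariant probability measure $\sigma$, with $S$ the shift on $K^{\Z}$.

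By Theorem~\ref{mainthm}, the sequence $\phi$ has a unique Furstenberg measure $\mu_\phi$ that is Bernoulli, so the first marginal of $\sigma$, being the weak limit of $\E_{n\in[N_k]}\delta_{S^n\widetilde\phi}$, equals $\mu_\phi$. The second marginal $\mu_X$ is a $T$-invariant Borel probability on $X$; the hypothesis $h(T)=0$ together with the variational principle forces $h_{\mu_X}(T)=0$. Since Bernoulli systems have completely positive entropy, Furstenberg's disjointness theorem guarantees that they are disjoint from every zero-entropy system, and hence the only joining of $\mu_\phi$ and $\mu_X$ is the product: $\sigma=\mu_\phi\times\mu_X$.

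Consequently, letting $F:K^{\Z}\to\C$ be the coordinate function $F(\omega):=\omega_0$ (so that $\phi(n)=F(S^n\widetilde\phi)$), the limit along $(N_k)$ equals
\[
\int F(\omega)\,f(z)\,d\sigma(\omega,z)\;=\;\Bigl(\int F\,d\mu_\phi\Bigr)\Bigl(\int f\,d\mu_X\Bigr).
\]
By uniqueness of the Furstenberg system of $\phi$, the first factor equals $\lim_N \E_{n\in[N]}\phi(n)$, which vanishes by the exponential-sum estimates in Theorem~\ref{SPexpsum} (Weyl equidistribution of $(G(n))_n$ modulo $1$ in the case $\phi=a$, and the corresponding estimate at frequency $\alpha\in\R\setminus\Z$ in the case $\phi=b$). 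Hence every subsequential limit is zero, and the corollary follows. The main obstacle is the disjointness step: one needs to identify the first marginal of $\sigma$ exactly with the Bernoulli Furstenberg system provided by Theorem~\ref{mainthm}, so that Furstenberg's disjointness theorem applies in its sharpest form and pins the joining down to the product measure, after which the conclusion is immediate.
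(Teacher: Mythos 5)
Your argument is correct and follows essentially the same route as the paper's proof: interpret the averages as integrals against a joining of the (unique, Bernoulli) Furstenberg system from Theorem~\ref{mainthm} with an invariant measure on $(X,T)$, invoke disjointness of Bernoulli systems from zero-entropy systems to split the joining as a product, and conclude since $\lim_{N\to\infty}\E_{n\in[N]}\phi(n)=0$ by the exponential sum estimates. Your write-up merely makes explicit the subsequential weak-$*$ compactness step and the variational principle, details the paper leaves implicit.
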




The following equidistribution result is a crucial ingredient for the proof of Theorem~\ref{mainthm} and improves on results of \cite{bp}.

\begin{theorem}\label{T: equi}
Let $0<c<1/2$, distinct $h_1,\dots,h_r \in \Z$, and $\alpha\in \R\setminus \Z$. Then the sequences \[((G(n+h_1),\ldots,G(n+h_r)))_n\;\;\text{and}\;\;((\alpha\lfloor G(n+h_1)\rfloor,\ldots, \alpha\lfloor G(n+h_r)\rfloor))_n\] are equidistributed in $\T^r$ and $X^r$ respectively, where $X=\overline{\{\alpha n \pmod 1 : n \in \Z\}}$, equipped with its Haar measure.\footnote{ Which is equal to $m_{\text{Leb}}$ in $X=\T$ when $\alpha\in \R\setminus\Q,$ and the measure in $X=\{0,1/q,\ldots, (q-1)/q\}$ which assigns $1/q$ to the values $j/q,$ $0\leq j\leq q-1,$  when $\alpha=p/q\in \Q\setminus \Z,$ $(p,q)=1$.} 
\end{theorem}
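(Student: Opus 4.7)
The plan is to use Weyl's criterion to reduce both equidistribution claims to a single family of exponential-sum estimates, and then to analyse the phase via Taylor expansion to bring the problem down to an exponential-sum estimate for a single Hardy field function of intermediate growth (to which the methods of \cite{bp}, in a refined form, apply).

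By Weyl's equidistribution criterion on $\mathbb{T}^r$ (for the first sequence) and on $X^r$ (for the second), both statements follow from
\[
(\ast)\qquad \lim_{N\to\infty}\E_{n\in[N]}\,e\!\left(\sum_{j=1}^{r}\gamma_j\,G(n+h_j)\right)=0
\]
for every non-zero $(\gamma_1,\ldots,\gamma_r)\in\R^r$. Indeed, the first claim is the case of integer $\gamma_j$. For the second, using $\alpha\lfloor t\rfloor=\alpha t-\alpha\{t\}$ and expanding the bounded periodic factor $\prod_j e(-\alpha m_j\{G(n+h_j)\})$ as a Fej\'er-smoothed Fourier series with absolutely summable coefficients (the small smoothing error being controlled by the $\T^r$-equidistribution of $(\{G(n+h_j)\})_j$ obtained from the first claim), one rewrites the Weyl sum on $X^r$ as an absolutely convergent combination of sums of the form $(\ast)$ with $\gamma_j=\alpha m_j+k_j$; non-triviality of the character on $X^r$ together with $\alpha\notin\Z$ ensures $(\gamma_j)\neq\mathbf{0}$.

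To prove $(\ast)$, set $\phi(n):=\sum_j\gamma_j G(n+h_j)$ and apply Taylor's theorem. Writing $S_k:=\sum_j\gamma_j h_j^k$, the distinctness of the $h_j$ and non-vanishing of $(\gamma_j)$ -- via nondegeneracy of the Vandermonde matrix $(h_j^k)_{0\le k\le r-1}$ -- force the existence of a smallest $m_0\in\{0,\ldots,r-1\}$ with $S_{m_0}\neq 0$. Direct computation from $G(x)=\exp((\log x)^{c+1})$ yields
\[
\frac{G^{(k+1)}(x)}{G^{(k)}(x)}\sim\frac{(c+1)(\log x)^c}{x}\longrightarrow 0,
\]
so the Taylor series is strongly convergent and $\phi(x)/G^{(m_0)}(x)\to S_{m_0}/m_0!\neq 0$; in particular $\phi$ lies in the same Hardy-field growth class as $G^{(m_0)}$, which still has intermediate (superpolynomial but subexponential) growth.

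The core of the problem, and the main obstacle, is then to establish $\E_{n\in[N]}\,e(\phi(n))=o(1)$ for such a Hardy-field $\phi$. The standard van der Corput $A$-process is ineffective: since $\phi^{(k)}$ itself grows superpolynomially for every $k\geq m_0$, the bound $N\sqrt{\lambda}$ coming from the second-derivative test provides no saving. Instead one must exploit the slow decay rate $(\log x)^c/x$ of the logarithmic derivative -- precisely where the hypothesis $c<1/2$ enters, following the strategy of \cite{bp}. The natural approach I would take is a scaled first-derivative/Kuzmin-Landau argument: partition $[N]$ into windows of length $H=H(N)$ chosen so that $\phi'$ varies by less than $1$ across each window; on each window Kuzmin-Landau yields cancellation controlled by the distance of $\phi'$ to the integers; and the exceptional windows are handled by iterating the same equidistribution reasoning on $\phi'$, a Hardy-field function of strictly smaller growth rate. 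Combining these estimates delivers $(\ast)$, and thereby Theorem~\ref{T: equi}.
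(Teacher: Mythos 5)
Your reduction of both claims to the single family of estimates $(\ast)$ is sound and close in spirit to the paper: the first claim is Weyl's criterion with integer coefficients, and your Fourier-expansion treatment of $e(-\alpha m_j\{G(n+h_j)\})$ (with the smoothing error absorbed by the first claim, and the check that a nontrivial character on $X^r$ forces some $\gamma_j=\alpha m_j+k_j\neq 0$) plays the role that the paper's Proposition~\ref{P: Weyl-type} plays, where the authors instead test the augmented vector $((\alpha_j G(n+h_j),G(n+h_j))_j)_n$ against Riemann-integrable functions. That part is fine.

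The genuine gap is in the core estimate $(\ast)$ itself, i.e.\ exactly at the step you label ``the main obstacle.'' Your proposed Kuzmin--Landau argument on windows where $\phi'$ varies by less than $1$ cannot be run: for $\phi(x)=\sum_j\gamma_jG(x+h_j)$ one has $\phi''(x)\asymp G(x)\bigl((c+1)(\log x)^{c}/x\bigr)^{2}\to\infty$ superpolynomially, so $\phi'$ already changes by far more than $1$ between consecutive integers once $x$ is large; there are no admissible windows of length $\geq 2$, and the first-derivative/Kuzmin--Landau test (like the second-derivative test you correctly discard) gives no saving. The iteration you suggest (``pass to $\phi'$, a function of strictly smaller growth'') does not rescue this, since $\phi'$ remains superpolynomial and the same obstruction recurs. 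The actual mechanism in the paper (and in \cite{bp}) is Karacuba's estimate (Lemma~\ref{lem10aif98}), which exploits derivatives of very high order $k\asymp\log^{c}N$: one needs, uniformly in $s$ up to $(c+1)\log^{c}N$, two-sided bounds $|\phi^{(s)}(x)/s!|\approx G(x)/x^{s+\tau}$ on $[N,2N]$, so that a positive proportion of the orders $s$ fall in the range $N^{-\theta s}\leq|\phi^{(s)}(x)/s!|\leq N^{-\Theta s}$; the saving $\exp(-\kappa\log^{1-2c}N)$ then comes from $c_1/k^{2}$ with $k\asymp\log^{c}N$, which is where $c<1/2$ is used. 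Your Taylor/Vandermonde observation that $\phi\sim (S_{m_0}/m_0!)\,G^{(m_0)}$ is a correct heuristic for the leading behaviour, but it does not supply the required \emph{uniform-in-$s$} derivative bounds for the linear combination (this is precisely the content of the paper's Proposition~\ref{lowerbound} and Lemma~\ref{upperbound}, proved via a finite-difference change of basis and Cauchy's integral formula), nor can $(\ast)$ be quoted from \cite[Theorem~2]{bp}, which treats $\alpha G(n)$ rather than arbitrary functions asymptotic to $G^{(m_0)}$. As written, the proposal's central exponential-sum step would fail.
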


It is worth noting here that for ``smooth enough'' (i.e., polynomial \cite{W16} or Hardy field \cite{Boshernitzan-equidistribution}) functions of polynomial growth, we usually have equidistribution results for the corresponding sequences when they are of different growth rates or they satisfy strong independence conditions. In our case though that we have intermediate growth,\footnote{ By this we mean that $G$ grows faster than any polynomial but slower than any exponential function; fact that it is easy (via L'H\^opital's rule) to also verify for every derivative of $G.$} we see that it is enough to take shifts of the function $G(x).$ Actually, an even stronger equidistribution result holds for the distinct shifts of $G$ (for the definition of ``goodness'' check Definition~\ref{D: good} below).

\begin{theorem}
\label{SPexpsum}
Let $0<c<1/2$ and distinct $h_1,\dots,h_r \in \Z$. Then the sequence \[((G(n+h_1),\ldots,G(n+h_r)))_n\] is good. 
\end{theorem}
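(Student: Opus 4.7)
The goal is to upgrade the equidistribution of Theorem~\ref{T: equi} to the stronger property of ``goodness'' (Definition~\ref{D: good}). By the Weyl criterion (together with the reductions that the precise definition of goodness is designed to afford), the task amounts to showing, for every non-zero $\bk = (k_1,\dots,k_r)\in\Z^r$, that the exponential sum
\[
S_N(\bk) := \sum_{n=1}^N e\bigl(F_{\bk}(n)\bigr), \qquad F_{\bk}(x) := \sum_{j=1}^r k_j\, G(x+h_j),
\]
is $o(N)$, likely uniformly in some auxiliary polynomial or arithmetic-progression twist.

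First, I would extract the leading-order behaviour of $F_{\bk}$. A formal Taylor expansion gives $F_{\bk}(x) = \sum_{m\geq 0} \frac{S_m}{m!}\, G^{(m)}(x)$, where $S_m := \sum_{j=1}^r k_j h_j^m$. Since the $h_j$ are distinct, a Vandermonde argument produces some $m_0\in\{0,\dots,r-1\}$ with $S_{m_0}\neq 0$. Writing $G(x) = \exp(L(x))$ with $L(x)=\log^{c+1}x$ and $L'(x) = (c+1)\log^c(x)/x$, induction (via Fa\`a di Bruno) gives $G^{(j)}(x) = (L'(x))^j\, G(x)\,(1+o(1))$, so $F_{\bk}(x)$ is dominated by $(S_{m_0}/m_0!)\,G^{(m_0)}(x)$. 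A crucial consequence is that $F_{\bk}''(x)/F_{\bk}'(x)^2 = O(1/G(x))$, a ratio that decays super-polynomially.

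Second, as a substitute for the van der Corput trick (which is unavailable at this growth rate), I would compare $S_N(\bk)$ with the integral $\int_1^N e(F_{\bk}(t))\,dt$ via Euler--Maclaurin summation (or a smooth Poisson splitting on dyadic scales), and then integrate by parts:
\[
\int_1^N e(F_{\bk}(t))\,dt = \left[\frac{e(F_{\bk}(t))}{2\pi i\, F_{\bk}'(t)}\right]_1^N + \frac{1}{2\pi i}\int_1^N e(F_{\bk}(t))\,\frac{F_{\bk}''(t)}{F_{\bk}'(t)^2}\,dt.
\]
Since $F_{\bk}'(N)\to\infty$ and $F_{\bk}''/(F_{\bk}')^2$ is super-polynomially small, both pieces on the right are $o(1)$, so $S_N(\bk)=o(N)$; iterating the integration by parts delivers savings of arbitrary order.

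The main obstacle will be to execute this integration-by-parts scheme uniformly over all non-zero $\bk$ and over the twists demanded by Definition~\ref{D: good}, while keeping the Euler--Maclaurin remainders under control --- in particular, verifying that the derivatives of $F_{\bk}$ are eventually monotone so that the boundary-term bound $1/|F_{\bk}'(N)|$ is genuinely attained. The hypothesis $0<c<1/2$ enters precisely here: it keeps $L'(x)=(c+1)\log^c(x)/x$ sufficiently regular --- eventually monotone decreasing to zero but still large enough --- to guarantee summability of the error terms throughout the iteration. Once these estimates are assembled, the Weyl criterion delivers the goodness of the sequence.
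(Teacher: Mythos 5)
Your plan has a fatal gap at its central analytic step: the sum--integral comparison. Replacing $\sum_{n\le N}e(F_{\bk}(n))$ by $\int_1^N e(F_{\bk}(t))\,dt$ with an error $o(N)$ via Euler--Maclaurin (or a smooth Poisson splitting) is only legitimate when the phase derivative is \emph{small} (the classical first-derivative test needs $|F'|\le 1/2$ and monotone to give $\sum=\int+O(1)$). Here, as you yourself note, $F_{\bk}'(x)\to\infty$, and it does so superpolynomially. In that regime the Euler--Maclaurin correction $\int_1^N(\{t\}-\tfrac12)\,2\pi i\,F_{\bk}'(t)e(F_{\bk}(t))\,dt$ is a priori of size $\int_1^N|F_{\bk}'|\asymp F_{\bk}(N)\gg N$, and expanding $\{t\}$ in a Fourier series (Poisson summation) only produces dual sums $\sum_n e(F_{\bk}(n)-\nu n)$ that are as hard as the original; the smallness of the \emph{integral} (which is all your integration by parts controls) says nothing about the \emph{sum}. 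A concrete counterexample to the scheme as written: $F(x)=2^x$ satisfies every property your argument actually uses ($F'\to\infty$ eventually monotone, $F''/(F')^2\to 0$ extremely fast, $\int_1^N e(F(t))\,dt=O(1)$), yet $\sum_{n\le N}e(F(n))=N$. It is symptomatic that $0<c<1/2$ plays no genuine role in your argument, whereas in the paper it is exactly what makes the final saving $\exp(-\kappa\log^{1-2c}N)$ tend to zero; your proposed use of $c<1/2$ (``regularity of $L'$'') is not where the difficulty lies.

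For comparison, the paper's route is a Vinogradov-type estimate: Karacuba's theorem (Lemma~\ref{lem10aif98}), which is designed precisely for phases like $\exp(\log^{c+1}x)$ and requires two-sided bounds $N^{-\theta s}\le|\alpha F^{(s)}(x)/s!|\le N^{-\Theta s}$ for a positive proportion of intermediate orders $s\le k\asymp\log^{c}N$, together with an upper bound at order $k$. Your first step (Taylor expansion in the shifts plus a Vandermonde argument locating the first nonvanishing moment $S_{m_0}$) is sound and parallels the paper's change of basis to finite differences (Lemma~\ref{changeofbasis}) and the lower bound of Proposition~\ref{lowerbound} (your $m_0$ is the paper's $\tau$); but these derivative bounds must be fed into Lemma~\ref{combinedineq}, Theorem~\ref{thm2aif98} and a dyadic decomposition, not into a non-stationary-phase estimate for the integral. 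A secondary, fixable slip: goodness (Definition~\ref{D: good}) demands that \emph{all nontrivial real} linear combinations have vanishing averages, so you cannot reduce to integer vectors $\bk$ via the Weyl criterion --- that only yields equidistribution (Theorem~\ref{T: equi}), which is strictly weaker, as the example $(\sqrt{17}\,n)_n$ in the paper shows.
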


Because of the latter results, we have the following von Neumann-type result for a probability system $(X,\mathcal{B},\mu)$ with commuting measure preserving transformations $T_1,\dots, T_r$.

\begin{theorem}\label{T: vN}
For $r\in \mathbb{N}$ let $(X,\mathcal{B},\mu,T_1,\ldots,T_r)$ be a measure preserving system and $f\in L^2(\mu)$ and let $0<c<1/2$. Then, for distinct $h_1,\ldots, h_r\in \mathbb{Z}$ we have  \[\lim_{N\to\infty}\norm{\E_{n \in [N]} T_1^{\left\lfloor G(n+h_1)\right\rfloor}\cdots T_r^{\left\lfloor G(n+h_r)\right\rfloor}f-Pf}_2=0,\] where $P$ denotes the projection on the set $\{f\in L^2(\mu):\;T_i f=f$ for all $1\leq i\leq r\}.$
\end{theorem}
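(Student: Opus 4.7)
The plan is to use the spectral theorem for commuting unitary operators to reduce the $L^2$-convergence to pointwise decay of a family of exponential sums on $\T^r$, and then to derive that decay from the goodness statement of Theorem~\ref{SPexpsum} via a Fourier expansion of the fractional-part factor. To each $f \in L^2(\mu)$ the spectral theorem attaches a finite positive Borel measure $\sigma_f$ on $\T^r$ satisfying $\langle T_1^{k_1} \cdots T_r^{k_r} f, f \rangle = \int_{\T^r} e(k\cdot t)\, d\sigma_f(t)$ for every $k \in \Z^r$, together with a unitary identification of the cyclic subspace generated by $f$ with $L^2(\T^r, \sigma_f)$ under which $f \leftrightarrow 1$ and $T_i \leftrightarrow M_{e(t_i)}$. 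In this model $Pf$ corresponds to $\mathbf{1}_{\{0\}}$ (a mean-ergodic argument puts $Pf$ in the cyclic subspace of $f$), so writing $g_N$ for the $N$-th average in the statement,
\[
\norm{g_N - Pf}_2^2 = \int_{\T^r \setminus \{0\}} \abs{A_N(t)}^2\, d\sigma_f(t), \qquad A_N(t) := \E_{n \in [N]} e\Bigbrac{\sum_{i=1}^r t_i \lfloor G(n+h_i) \rfloor}.
\]
Since $\abs{A_N(t)} \le 1$, dominated convergence reduces the theorem to showing $A_N(t) \to 0$ for every $t \in \T^r \setminus \{0\}$.

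Now fix such a $t$, identified with a representative in $[0,1)^r$, and use $\lfloor G(m) \rfloor = G(m) - \{G(m)\}$ to factor
\[
A_N(t) = \E_n e\Bigbrac{\sum_i t_i G(n+h_i)} \cdot \Psi\bigbrac{\{G(n+h_1)\},\dots,\{G(n+h_r)\}},
\]
with $\Psi(s) := e(-\sum_i t_i s_i)$ viewed as a bounded, piecewise-smooth function on $\T^r \cong [0,1)^r$. Because $\Psi$ has jumps across the coordinate seams, its Fourier coefficients decay only like $\prod_i \abs{t_i + k_i}^{-1}$ and are typically not absolutely summable. I would bypass this by mollifying to obtain, for each $\veps > 0$, a smooth $\Psi^{(\veps)}$ with $\widehat{\Psi^{(\veps)}} \in \ell^1(\Z^r)$ and $\norm{\Psi - \Psi^{(\veps)}}_{L^1(\T^r)} \to 0$ as $\veps \to 0$.

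For the resulting main term, the identity $e(k \cdot \{s\}) = e(k \cdot s)$ for $k \in \Z^r$ combined with absolute summability yields
\[
\E_n e\Bigbrac{\sum_i t_i G(n+h_i)} \Psi^{(\veps)}(\{G(n+h_i)\}) = \sum_{k \in \Z^r} \widehat{\Psi^{(\veps)}}(k)\, \E_n e\Bigbrac{\sum_i (t_i + k_i) G(n+h_i)}.
\]
Because $t \in [0,1)^r \setminus \{0\}$ forces some $t_j \in (0,1)$, the real vector $t + k$ is nonzero for every $k \in \Z^r$, so Theorem~\ref{SPexpsum} sends each inner average to $0$; an $\ell^1$ dominated convergence in $k$ then kills the entire sum. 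For the remaining error $\E_n \abs{\Psi - \Psi^{(\veps)}}(\{G(n+h_i)\})$, the mod-$1$ equidistribution of $(G(n+h_i))_i$ in $\T^r$ from Theorem~\ref{T: equi}, together with Riemann integrability of $\abs{\Psi - \Psi^{(\veps)}}$, gives a limit of $\norm{\Psi - \Psi^{(\veps)}}_{L^1(\T^r)}$, which vanishes as $\veps \to 0$. The main obstacle is precisely this lack of absolute summability of $\Psi$'s Fourier series caused by the discontinuity of the fractional part; it is what forces us to invoke the real-frequency exponential sum bound from Theorem~\ref{SPexpsum} rather than merely the integer-frequency (i.e.\ mod-$1$) equidistribution of Theorem~\ref{T: equi}.
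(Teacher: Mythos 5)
Your argument is correct, and its spectral skeleton is essentially the paper's: the paper also passes through the Bochner--Herglotz representation (Theorem~\ref{T:BH}), reducing matters to the decay of $\E_{n\in[N]} e\bigl(\sum_j \alpha_j\lfloor G(n+h_j)\rfloor\bigr)$ integrated against the spectral measure, except that it first splits $L^2(\mu)=L^2_{\text{inv}}\oplus L^2_{\text{erg}}$ via Theorem~\ref{T:split} and uses Corollary~\ref{spectralET} to see that $\nu_f(\{\vec 0\})=0$ on the ergodic part, whereas you keep $Pf$ inside the cyclic subspace model (your aside that a mean-ergodic argument places $Pf$ in the cyclic subspace is exactly the point that replaces the splitting step, and it is correct). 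The genuine divergence is in how the pointwise decay on $\T^r\setminus\{\vec 0\}$ is obtained: the paper has a dedicated statement, Proposition~\ref{P: Weyl-type}, proved by a case analysis on rational versus irrational coefficients, augmenting the vector to $((\alpha_i G(n+h_i), G(n+h_i)))_n$ or $((G(n+h_i)/m))_n$ and invoking the equidistribution of these enlarged tuples together with indicator-function computations on subcubes of $\T^r$. You instead re-derive this decay in one stroke by writing $\lfloor G\rfloor=G-\{G\}$, mollifying the discontinuous factor $\Psi$, expanding $\Psi^{(\veps)}$ in an absolutely convergent Fourier series so that Theorem~\ref{SPexpsum} applies to each shifted real frequency $t+k\neq 0$, and controlling the mollification error through the Riemann-integrable test-function form of equidistribution from Theorem~\ref{T: equi}. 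Your route avoids the rational/irrational case distinction and treats all nonzero $t$ uniformly, at the cost of the mollification/Fourier machinery; the paper's route is more elementary (no Fourier truncation) but needs the ad hoc augmentation tricks of Proposition~\ref{P: Weyl-type}. All the analytic steps you invoke (absolute summability of $\widehat{\Psi^{(\veps)}}$, dominated convergence in $k$ and in $t$, $L^1$-convergence of the mollification, Riemann integrability of $|\Psi-\Psi^{(\veps)}|$) are justified, so the proof stands as a valid alternative to the one in the paper.
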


\begin{remark}
In the special case where $r=1,$ the previous result follows by the spectral theorem together with \cite[Theorem~2]{bp}.

\end{remark}





\subsection{Structure of the paper}
The structure of the paper is as follows. In section~\ref{S:2}, we give the necessary background that is required to follow the rest of the paper: some basics on measure preserving systems, the relevant definitions for one of the main objects of study: Furstenberg systems, and also some necessary basic results on Hilbert space splittings and equidistribution of real-valued sequences on tori.

Section~\ref{Sec:proofs} is the bulk of the paper and concerns the proofs of some intermediate results on bounds of derivatives of the functions of interest. These are based on the ideas 
of the proof of Theorem 2 of \cite{bp}, but they have to be modified and adapted so that we can actually deal with the linear combinations that naturally appear when we start considering the averages that determine a Furstenberg system.

The aforementioned results help us prove, in Section~\ref{sec: proofs of main}, the main exponential sum theorem (Theorem~\ref{SPexpsum}) which in turn implies the equidistribution properties of our sequences (Theorem~\ref{T: equi}). Once these results are established, we can use them to characterize the Furstenberg systems associated to the corresponding sequences with intermediate growth (Theorem~\ref{mainthm}), some corollaries with topological dynamics flavor (Corollary~\ref{topcor intro}), as well as a von Neumann type ($L^2$) convergence result (Theorem~\ref{T: vN}).

\subsection{Open problems}
It is natural to consider, as is done in \cite{Fra22}, the study of Furstenberg systems associated to sequences of the form $d(n):=f(T^{\lfloor G(n) \rfloor}x)$, where $x \in X$ is a ``typical'' point of a measure preserving system $(X,\mathcal{B},\mu,T)$, $f \in L^{\infty}(\mu)$ and $0<c<1/2$.

In order to be able to obtain a result similar to \cite[Theorem~1.6]{Fra22} or \cite[Corollary~1.7]{Fra22} one would need to have seminorm control for averages of the form
\[
\lim_{N \to \infty} \E_{n \in [N]} f(T^{\lfloor G(n+h_1)\rfloor}x)\cdots f(T^{\lfloor G(n+h_r)\rfloor}x).
\]
This is currently beyond our reach, as the sequence $(\lfloor G(n)\rfloor)_n$ grows too fast for the van der Corput trick to be helpful, and we cannot even attempt to prove a strong stationarity type of property for them by reducing the problem to nilsystems. 

However, given the independence that shifts of the function $G(x)$ enjoy, and the strong equidistribution results we were able to prove, it is natural to expect the result to be true, so we leave it as a conjecture.

\begin{conjecture}\label{conjecture: pointwise}
Let $(X,\mathcal{B},\mu,T)$ be a measure preserving system, $0<c<1/2$ and $f \in L^{\infty}(\mu)$. Then $\mu$-a.e. $x \in X$ is such that the Furstenberg system associated to $d(n):=f(T^{\lfloor G(n)\rfloor}x)$ is unique and Bernoulli. 
\end{conjecture}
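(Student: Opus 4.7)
The plan is to combine the equidistribution and goodness results of the paper (Theorems~\ref{T: equi} and~\ref{SPexpsum}) with a spectral reduction and a quantitative Borel--Cantelli argument. Assuming $T$ is ergodic (the general case follows on ergodic components), Bernoullicity and uniqueness of the Furstenberg system of $d(n):=f(T^{\lfloor G(n)\rfloor}x)$ reduce, by standard density and expansion arguments, to showing that for $\mu$-a.e.\ $x$, every $r\in\N$, every tuple of distinct shifts $h_1,\dots,h_r\in\Z$, and every family $f_1,\dots,f_r$ drawn from a fixed countable dense subset of the unit ball of $L^\infty(\mu)$, at least one member of which has mean zero,
\[
\E_{n \in [N]}\prod_{j=1}^{r} f_j\bigl(T^{\lfloor G(n+h_j)\rfloor}x\bigr)\longrightarrow 0.
\]

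The $L^2(\mu)$-version is the natural first target. The linear case $r=1$ is a clean spectral application: the spectral theorem gives
\[
\norm{\E_{n \in [N]} T^{\lfloor G(n+h_1)\rfloor} f_1}_{2}^{2}=\int_{\T}\Bigl|\E_{n \in [N]}e\bigl(\lambda\lfloor G(n+h_1)\rfloor\bigr)\Bigr|^{2}\, d\sigma_{f_1}(\lambda),
\]
and Theorem~\ref{SPexpsum} (or directly Theorem~\ref{T: equi}) kills the inner sum for every $\lambda\in\T\setminus\{0\}$; since mean-zero of $f_1$ forces $\sigma_{f_1}(\{0\})=0$, dominated convergence yields $L^2$-decay (this is essentially the content of Theorem~\ref{T: vN} for $r=1$). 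For higher $r$, the natural approach is to apply van der Corput to the outer $n$-average---permissible since it is not performed on the iterates themselves---and to follow up with a spectral analysis of the resulting shifted correlations, each resulting term falling under a multi-parameter use of Theorem~\ref{SPexpsum}.

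To lift $L^2$-decay to pointwise a.e.\ convergence, I would aim for a quantitative strengthening of Theorem~\ref{SPexpsum} yielding polynomial decay $O(N^{-\delta})$ for the exponential sums involved, inherited from the derivative estimates of Section~\ref{Sec:proofs}. Substituted into the spectral identity above, this produces summable $L^2$-decay along the lacunary subsequence $N_k:=2^k$; Borel--Cantelli then gives a.e.\ convergence along $(N_k)$, and a maximal inequality for the averages $\E_{n\in[N]}\prod_j f_j(T^{\lfloor G(n+h_j)\rfloor}x)$ would fill in the gaps.

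The principal obstacle is this last step, and already its $r=1$ case: a pointwise ergodic theorem along $(\lfloor G(n)\rfloor)_n$. Such theorems classically rest on maximal inequalities built via oscillatory/van der Corput techniques, which in the intermediate-growth regime are unavailable---as they are for the seminorm control needed in the approach of \cite{Fra22}. A quantitative form of Theorem~\ref{SPexpsum} appears accessible by the present methods, but the accompanying maximal inequality along $(\lfloor G(n)\rfloor)_n$ will almost certainly require genuinely new ideas tailored to the analytic structure of $G(x)=x^{\log^{c}x}$.
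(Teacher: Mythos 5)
You should be aware that the paper does not prove this statement at all: it is stated as Conjecture~\ref{conjecture: pointwise} and explicitly left open, the authors noting that the required seminorm control for averages of the form $\E_{n\in[N]}f(T^{\lfloor G(n+h_1)\rfloor}x)\cdots f(T^{\lfloor G(n+h_r)\rfloor}x)$ is ``currently beyond our reach'', since $(\lfloor G(n)\rfloor)_n$ grows too fast for the van der Corput trick and no reduction to nilsystems is available (it is even open for nilrotations on $\T^2$). So there is no paper proof to compare against, and your proposal, as you yourself partly concede, does not close the gap.

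Beyond the obstacle you name, there are two more fundamental problems. First, the multilinear step does not reduce to Theorem~\ref{SPexpsum}. Applying van der Corput to the outer $n$-average of the scalar sequence $\prod_j f_j(T^{\lfloor G(n+h_j)\rfloor}x)$ just produces averages of the same shape with more shifts (no complexity is lost), while in the $L^2$ picture it produces genuine multiple correlations $\int \prod_j T^{\lfloor G(n+h_j)\rfloor}f_j\cdot T^{\lfloor G(m+h_j)\rfloor}\bar f_j\,d\mu$ of a \emph{single} transformation. The Bochner--Herglotz representation (Theorem~\ref{T:BH}) that powers the spectral argument applies only to linear averages of commuting unitaries, which is why Theorem~\ref{T: vN} needs $r$ distinct transformations $T_1,\dots,T_r$, one per iterate; there is no ``multi-parameter use of Theorem~\ref{SPexpsum}'' that converts multiple correlations of one $T$ into exponential sums. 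Controlling them is exactly the missing seminorm input. Second, the quantitative Borel--Cantelli route is blocked even at $r=1$: Theorem~\ref{thm2aif98} yields savings of size $\exp(-\kappa\log^{1-2c}N)$, not $N^{-\delta}$, and more importantly the exponential-sum decay is not uniform in the frequency $\lambda$ near $0$, so the spectral integral $\int_{\T}\bigl|\E_{n\in[N]}e(\lambda\lfloor G(n+h_1)\rfloor)\bigr|^2\,d\sigma_{f_1}(\lambda)$ admits no rate valid for all mean-zero $f_1$; the rate depends on the mass of $\sigma_{f_1}$ near $0$ and can be arbitrarily slow. Hence summable $L^2$-decay along $N_k=2^k$ cannot be extracted in general, and already the pointwise genericity of $\mu$-a.e.\ $x$ along $(\lfloor G(n)\rfloor)_n$ (needed just to identify the one-dimensional marginal of the Furstenberg system as the distribution of $f$) is an open pointwise ergodic theorem. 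Your text is a reasonable research outline, but it should be presented as such, not as a proof of the conjecture.
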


As an intermediate step, one could try to prove the special case of Conjecture~\ref{conjecture: pointwise} when $X=G/\Gamma$ is a nilmanifold equipped with its Borel $\sigma$-algebra, $T=R_a$ is the nilrotation $R_a(g\Gamma)=ag\Gamma$, and $\mu$ is the Haar measure on $X$. In this case, one would consider the case where $f \in C(X)$. We also note that this is open even for nilrotations on the torus (for example, the case where $X=\T^2$ and $T(x,y):=(x+\alpha,y+x)$ for some $\alpha \in \R \setminus \Q$).

If one were able to prove Conjecture~\ref{conjecture: pointwise}, then, it is also likely that the following result would hold.
\begin{conjecture}\label{conjecture: l2convergence}
Let $(X,\mathcal{B},\mu)$ be a probability space, and $T, S: X \to X$ two (not necessarily commuting) measure preserving transformations. Let $0<c<1/2$ and suppose that $(X,\mu,T)$ has zero entropy. Then,
\begin{itemize}
    \item For every $f, g \in L^{\infty}(\mu)$ we have
     \[
    \lim_{N \to \infty}\norm{\E_{n \in [N]} T^nf S^{\lfloor G(n)\rfloor}g -\E[f|\mathcal{I}_T]\E[g|\mathcal{I_S}]}_2=0,\footnote{ $\E[\cdot|\mathcal{I}_T]$ and $\E[\cdot|\mathcal{I}_S]$ denote the conditional expectations on $\mathcal{I}_T$ and $\mathcal{I}_S$, the $\sigma$-algebras of $T$-invariant and $S$-invariant sets of $X$ respectively.}
    \]
    where the limit is taken in $L^2(\mu)$, and 
    \item For every $A \in \mathcal{B}$ it is
    \[
    \lim_{N \to \infty} \E_{n \in [N]} \mu( A \cap T^{-n}A \cap T^{-\lfloor G(n) \rfloor}A) \geq \mu(A)^3.
    \]
\end{itemize}
\end{conjecture}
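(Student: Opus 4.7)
The plan is to deduce both parts of Conjecture~\ref{conjecture: l2convergence} from Conjecture~\ref{conjecture: pointwise} via the classical Furstenberg disjointness of Bernoulli systems from systems of zero entropy. The heuristic is that the zero-entropy hypothesis on $T$ forces the Furstenberg system of $(f(T^n x))_n$ to have zero entropy (it is a factor of $(X,\mu,T)$), while Conjecture~\ref{conjecture: pointwise} applied to $(X,\mu,S)$ with the function $g$ guarantees that the Furstenberg system of $(g(S^{\lfloor G(n)\rfloor}x))_n$ is Bernoulli for $\mu$-a.e.\ $x$. Any joint Furstenberg system of the two sequences is a joining of a zero-entropy factor with a Bernoulli factor, and must therefore be the product joining.

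For the $L^2$ convergence in the first item, I would fix $x$ in the full-measure set on which Conjecture~\ref{conjecture: pointwise} applies to $(S,g,x)$, and pass to a subsequence $(N_k)$ along which the empirical measures $\E_{n\in[N_k]}\delta_{\sigma^n\pi(x)}$ on $\C^{\Z}\times\C^{\Z}$ converge weak-$*$ to a shift-invariant measure $\mu_x$, with $\pi(x):=\bigl((f(T^n x))_n,(g(S^{\lfloor G(n)\rfloor}x))_n\bigr)$ and $\sigma$ the diagonal shift. The marginal $(\pi_1)_*\mu_x$ is a factor of $(X,\mu,T)$, hence has zero entropy, while $(\pi_2)_*\mu_x$ is Bernoulli by Conjecture~\ref{conjecture: pointwise}. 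Disjointness yields $\mu_x=(\pi_1)_*\mu_x\otimes(\pi_2)_*\mu_x$, and integrating the product of the time-$0$ coordinates factorises the limit:
\[
\lim_{k\to\infty}\E_{n\in[N_k]}f(T^n x)\,g(S^{\lfloor G(n)\rfloor}x)=\Big(\lim_{k\to\infty}\E_{n\in[N_k]}f(T^n x)\Big)\Big(\lim_{k\to\infty}\E_{n\in[N_k]}g(S^{\lfloor G(n)\rfloor}x)\Big).
\]
Birkhoff's theorem identifies the first factor with $\E[f|\mathcal{I}_T](x)$, and Theorem~\ref{T: vN} (case $r=1$), along a further subsequence on which the $L^2$-convergent averages converge pointwise a.e., identifies the second factor with $\E[g|\mathcal{I}_S](x)$. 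Since every subsequence of $\Phi_N:=\E_{n\in[N]}T^nf\cdot S^{\lfloor G(n)\rfloor}g$ has a further subsequence converging pointwise a.e. to the same limit $\E[f|\mathcal{I}_T]\,\E[g|\mathcal{I}_S]$, uniform boundedness and dominated convergence deliver the claimed $L^2$ convergence.

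For the recurrence estimate, specialising the first item to $S=T$ and $f=g=\mathbf{1}_A$ (legitimate since $T$ is itself a measure preserving transformation on $(X,\mathcal{B},\mu)$) yields
\[
\lim_{N\to\infty}\E_{n\in[N]}\mu(A\cap T^{-n}A\cap T^{-\lfloor G(n)\rfloor}A)=\int \mathbf{1}_A\cdot\E[\mathbf{1}_A|\mathcal{I}_T]^2\,d\mu.
\]
The tower property rewrites the right-hand side as $\int\E[\mathbf{1}_A|\mathcal{I}_T]^3\,d\mu$, and since $\int\E[\mathbf{1}_A|\mathcal{I}_T]\,d\mu=\mu(A)$, Jensen's inequality applied to the convex function $t\mapsto t^3$ on $[0,1]$ delivers the bound $\mu(A)^3$.

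The principal obstacle is Conjecture~\ref{conjecture: pointwise} itself: as noted by the authors, the iterates $\lfloor G(n)\rfloor$ grow too fast for a van der Corput-type inversion, and there is no apparent path to the seminorm estimates needed to reduce to nilsystems. A secondary technical subtlety is the almost-everywhere identification of the mean of the second-coordinate Furstenberg system with $\E[g|\mathcal{I}_S]$; the plan circumvents a pointwise strengthening of Theorem~\ref{T: vN} by passing to suitable subsequences and invoking dominated convergence to recover the $L^2$ statement, but a fully pointwise version of Theorem~\ref{T: vN} would make the argument considerably cleaner.
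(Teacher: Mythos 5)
The statement you are proving is Conjecture~\ref{conjecture: l2convergence}: the paper does not prove it, and explicitly presents it as an open problem that would ``likely'' follow \emph{if} Conjecture~\ref{conjecture: pointwise} were established. Your proposal is precisely that conditional reduction, and you candidly acknowledge it: the entire argument rests on applying Conjecture~\ref{conjecture: pointwise} to $(X,\mu,S)$ and $g$ to get a Bernoulli Furstenberg system for $(g(S^{\lfloor G(n)\rfloor}x))_n$ at a.e.\ $x$. Since that conjecture is open (the authors note that the growth of $\lfloor G(n)\rfloor$ defeats van der Corput and any reduction to nilsystems, and that even the case of a two-step nilrotation on $\T^2$ is unresolved), what you have written is a plausible conditional implication, not a proof of the statement; there is no proof in the paper to compare it against.

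Within the conditional argument itself there are two technical points you should tighten. First, the order of quantifiers in the subsequence extraction is backwards as written: you fix an $x$-dependent subsequence $(N_k)$ making the joint empirical measures converge, and only then invoke an a.e.-convergent subsequence of the $L^2$-convergent averages from Theorem~\ref{T: vN}; but a.e.\ convergence along a further subsequence is only available for a subsequence chosen \emph{before} $x$. The standard fix is to fix an arbitrary subsequence of $\Phi_N$ first, extract from it (using Theorem~\ref{T: vN} and Birkhoff) a further subsequence along which both $\E_{n\in[N]}g(S^{\lfloor G(n)\rfloor}x)\to\E[g|\mathcal{I}_S](x)$ and $\E_{n\in[N]}f(T^nx)\to\E[f|\mathcal{I}_T](x)$ for a.e.\ $x$, and only then, for each such $x$, argue via weak$^*$ limit points of the empirical measures (every limit point is a joining of a zero-entropy system with a Bernoulli system, hence product), concluding pointwise convergence of $\Phi_N(x)$ along the fixed subsequence and finishing with dominated convergence. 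Second, the claim that the first marginal $(\pi_1)_*\mu_x$ has zero entropy needs justification beyond ``it is a factor of $(X,\mu,T)$'': for non-ergodic $\mu$ and merely $L^\infty$ functions one must pass through the ergodic decomposition (a.e.\ component of a zero-entropy system has zero entropy) and the a.e.\ genericity of $x$ for its component with respect to the countable family of correlation functionals, so that the marginal empirical measures converge along the full sequence to a factor of that component. Both points are repairable by standard arguments, but the essential gap remains the unproved Conjecture~\ref{conjecture: pointwise}, exactly as you identify.
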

We would like to remark that in the commutative case, i.e., $TS=ST$, we expect Conjecture~\ref{conjecture: l2convergence} above to be true without the assumption that $(X,\mu,T)$ has zero entropy.

\medskip

\noindent {\bf{Acknowledgements.}} We wish to thank Nikos Frantzikinakis who not only suggested the sequences that we dealt with, but also made comments on the first draft of the article which led to its improvement. 

\section{Background}\label{S:2} 
\subsection{Measure preserving systems}
Let $(X,\mathcal{B},\mu)$ be a probability space. Let $T_1,\dots,T_r: X \to X$ be commuting invertible measure preserving transformations. We will say that the tuple $(X,\mathcal{B},\mu,T_1,\dots,T_r)$ is a measure preserving system. Oftentimes we will only use single transformation measure preserving systems, i.e., we will have $r=1$. Recall that a system is ergodic if the only sets invariant under the action generated by $\langle T_1,\dots, T_r\rangle$ have either full or zero measure. 

For other common concepts in ergodic theory, such as conditional expectations, factor, Bernoulli system, joining, entropy, and disjoint systems we refer the reader to Section 2 of \cite{HK18}.

\subsection{Furstenberg systems}
The purpose of this subsection is to present some basic facts about one of our main objects of study in this paper: Furstenberg systems. We will follow the presentation in \cite{Fra22} adapted to our slightly simpler context.
\begin{definition}
Let $a: \Z \to \S^1$ be a sequence. We say that $a$ \emph{admits correlations} if there exists a subsequence of increasing natural numbers $(N_k)_k$ such that the limits
\[
\lim_{k \to \infty} \E_{n \in [N_k]} a^{\varepsilon_1}(n+h_1)\cdots a^{\varepsilon_r}(n+h_r)
\]
exist for all $r \in \N$, all distinct $h_1,\dots,h_r \in \Z$ and all $(\varepsilon_1,\dots,\varepsilon_r) \in \Z^r \setminus \{\vec{0}\}$.
\end{definition}

Even though Furstenberg systems can be defined over different sequences of intervals, we will only be interested in those arising from the standard intervals $[1,N]$ and subsequences of those.

\begin{remark}
It is worth pointing out that if $a$ admits correlations, then, by letting $S: (\S^1)^\Z \to (\S^1)^{\Z}$ be the shift map $S(y_n)_{n \in \Z}:=(y_{n+1})_{n \in \Z}$ one can show that the weak* limit $\lim_{k \to \infty} \E_{n \in [N_k]} \delta_{S^n a}$ exists.    
\end{remark}
We are now in position to give the definition of Furstenberg system.

\begin{definition}[Furstenberg system of a sequence]\label{D: FS}
Let $a: \Z \to \S^1$ be a sequence that admits correlations on $([N_k])_{k \in \N}$. Put $X:=(\S^1)^{\Z}$, equipped with the product topology, $\mathcal{B}:=\text{Borel}(X)$, and $S: X \to X$ be the shift map as defined above. Let $\mu:=\lim_{k \to \infty} \E_{n \in [N_k]} \delta_{T^n a}$ be a weak* limit point of the sequence of measures $\left( \E_{n \in [N]} \delta_{T^n a}\right)_{n}$. We say that the system $(X,\mathcal{B},\mu,T)$ is a \emph{Furstenberg system} of $a$ on $([N_k])_{k}$.

We say that the Furstenberg system of $a$ is \emph{unique} if $a$ admits correlations on the traditional intervals $([N])_{N \in \N}$.
\end{definition}
\begin{remarks}
(1) If we are given a sequence $a: \N \to \S^1$, we simply extend it in any way to a sequence $a: \Z \to \S^1$; the measure $\mu$ is not affected at all by this extension.
\\ \\
\noindent(2) It is shown in \cite{bf21} that under some natural restrictions, such Furstenberg systems are unique, in the sense that they do not depend on the construction of the measure space, but are rather characterized by the statistical properties of the sequences.
\end{remarks}
We will be especially interested in determining when the measure $\mu$ is the product measure $\bigotimes_{n \in \Z} m_{\text{Leb}}$. To this effect, we have the following result, which is a consequence of the Stone-Weierstrass theorem. 
\begin{proposition}\label{Fsystemchar}
Let $a: \N \to \S^1$ be a sequence and $(X,\mathcal{B},\mu,T)$ be a Furstenberg system for $a$. Let $F: X \to \S^1$ be given by $F(x):=x(0).$ Then, the measure $\mu$ is determined by the values:
\[
\int_X T^{h_1}F^{\varepsilon_1}\cdots T^{h_r} F^{\varepsilon_r} \ d\mu,
\]
for all $r \in \N$, all distinct $h_1,\dots, h_r \in \Z$, and all $(\varepsilon_1,\dots,\varepsilon_r) \in \Z^r$.
\end{proposition}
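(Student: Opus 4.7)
The plan is to reduce the statement to a standard Stone--Weierstrass argument on the compact metrizable space $X=(\S^1)^{\Z}$. First I would observe that $T^hF(x)=F(T^hx)=x(h)$ is simply the projection $\pi_h$ onto the $h$-th coordinate, so the integrals appearing in the proposition are precisely the multivariate moments
\[
\int_X \prod_{j=1}^r x(h_j)^{\varepsilon_j}\,d\mu(x),
\]
indexed by finitely many distinct $h_j\in\Z$ and arbitrary $\varepsilon_j\in\Z$. Since $X$ is a compact metrizable space and $\mu$ a Borel probability measure on it, the Riesz representation theorem implies that $\mu$ is determined by the functional $f\mapsto\int_X f\,d\mu$ on $C(X)$; it therefore suffices to show that the linear span of the monomials above is uniformly dense in $C(X)$.

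For this I would invoke Stone--Weierstrass. Let $\mathcal{A}\subseteq C(X)$ denote the unital $\C$-subalgebra generated by the coordinate projections $\pi_h$, $h\in\Z$. Since each $\pi_h$ takes values in $\S^1$, one has $\overline{\pi_h}=\pi_h^{-1}\in\mathcal{A}$, so $\mathcal{A}$ is closed under complex conjugation; it contains the constants and separates points of $X$ (two distinct sequences differ in some coordinate). Hence $\mathcal{A}$ is uniformly dense in $C(X)$. Every element of $\mathcal{A}$ is by construction a finite $\C$-linear combination of products $\prod_{j=1}^r \pi_{h_j}^{\varepsilon_j}$ with $h_j\in\Z$ and $\varepsilon_j\in\Z$; after consolidating repeated indices via $\pi_h^{a}\pi_h^{b}=\pi_h^{a+b}$, such a product rewrites as one over distinct shifts, matching exactly the expression $T^{h_1}F^{\varepsilon_1}\cdots T^{h_r}F^{\varepsilon_r}$ of the statement.

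Combining the two steps, the prescribed integrals determine $\int g\,d\mu$ for every $g\in\mathcal{A}$, and hence, by uniform density, for every $g\in C(X)$, so by Riesz representation they determine $\mu$. I do not expect any substantive obstacle: the argument is purely formal once one identifies $T^hF$ with the coordinate projection $\pi_h$. The only bookkeeping point worth spelling out is the merging of exponents at coinciding shifts, which is what reconciles the ``all monomials'' output of Stone--Weierstrass with the ``distinct $h_j$'' indexing used in the statement.
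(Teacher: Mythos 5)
Your proposal is correct and follows essentially the same route as the paper: identify the functions $T^{h}F$ with coordinate projections, apply Stone--Weierstrass to the (conjugation-closed, point-separating, unital) algebra they generate, and conclude via the Riesz representation theorem that the stated integrals determine $\mu$. Your added remarks on conjugation closure and on merging exponents at coinciding shifts are just the bookkeeping the paper leaves implicit.
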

\begin{proof}
The polynomials $z^n: \S^1\to \S^1, n \in \Z$ generate a $C^*$ algebra of $C(\S^1)$ that separates points. Therefore, with our notation above, we have that the family
\[
\mathcal{A}:=\{ T^{h_1}F^{\varepsilon_1}\cdots T^{h_r}F^{\varepsilon_r}: r \in \N, \text{ distinct } h_1,\dots,h_r \in \Z, (\varepsilon_1,\dots,\varepsilon_r) \in \Z^r\}
\]
generates a $C^*$-algebra that separates points in $X$. 

Now, by the Stone-Weierstrass theorem, the $C^*$-algebra generated by these functions is exactly equal to $C(X)$, and by the Riesz-Fischer theorem, the measure $\mu$ is determined by its values on $C(X)$. Thus, by linearity of $\mu$, knowing its values on $\mathcal{A}.$ 
\end{proof}

Proposition~\ref{Fsystemchar} allows us to deduce a convenient characterization of sequences whose Furstenberg system is unique and Bernoulli.

\begin{corollary}\label{cor: FsysBern}
Let $a: \Z \to \S^1$ be a sequence such that 
\begin{equation}\label{corFsyshyp}
\lim_{N \to \infty} \E_{n \in [N]} a^{\varepsilon_1}(n+h_1)\cdots a^{\varepsilon_r}(n+h_r)=0,
\end{equation}
for all $r \in \N$, all distinct $h_1,\dots,h_r \in \Z$ and all $(\varepsilon_1,\dots,\varepsilon_r) \in \Z^r\setminus\{\vec{0}\}$. Then, $(a(n))_{n\in\Z}$ has a unique Furstenberg system that is Bernoulli (with $\mu=\otimes_{n \in \Z} m_{\text{Leb}}$).
\end{corollary}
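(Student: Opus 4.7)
The plan is to apply Proposition~\ref{Fsystemchar} directly: it suffices to verify that any Furstenberg system $(X,\mathcal{B},\mu,T)$ associated to $a$ assigns the same integral as the product measure $\nu:=\bigotimes_{n \in \Z} m_{\text{Leb}}$ to every function of the form $T^{h_1}F^{\varepsilon_1}\cdots T^{h_r}F^{\varepsilon_r}$, where $r \in \N$, the $h_i \in \Z$ are distinct, and $(\varepsilon_1,\ldots,\varepsilon_r) \in \Z^r$.

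First, I would unwind definitions. Since $F(x) = x(0)$ and $T$ is the shift on $X=(\S^1)^\Z$, we have $(T^h F)(x) = x(h)$, and therefore
\[
\bigl(T^{h_1}F^{\varepsilon_1}\cdots T^{h_r}F^{\varepsilon_r}\bigr)(x) \;=\; x(h_1)^{\varepsilon_1}\cdots x(h_r)^{\varepsilon_r}.
\]
Because the $h_i$ are distinct and the coordinates are independent under $\nu$, the integral against $\nu$ factors as $\prod_{i=1}^r \int_{\S^1} z^{\varepsilon_i}\,dm_{\text{Leb}}(z)$, which equals $1$ when every $\varepsilon_i = 0$ and $0$ otherwise.

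Next, evaluating the same function at $x = T^n a$ recovers $a(n+h_1)^{\varepsilon_1}\cdots a(n+h_r)^{\varepsilon_r}$, so by the very definition of a Furstenberg system,
\[
\int_X T^{h_1}F^{\varepsilon_1}\cdots T^{h_r}F^{\varepsilon_r}\,d\mu \;=\; \lim_{k\to\infty}\E_{n \in [N_k]} a^{\varepsilon_1}(n+h_1)\cdots a^{\varepsilon_r}(n+h_r),
\]
for whatever subsequence $(N_k)_k$ defines $\mu$. The hypothesis \eqref{corFsyshyp} asserts that this limit exists along the full sequence $[N]$ and vanishes whenever $(\varepsilon_1,\ldots,\varepsilon_r) \ne \vec{0}$; in the all-zero case the average is trivially $1$. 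The mixed case in which some but not all $\varepsilon_i$ vanish is handled for free, since the corresponding factors $F^0 = 1$ can be dropped from the product and the hypothesis is then applied to the remaining nonzero indices.

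Matching the two computations, Proposition~\ref{Fsystemchar} forces $\mu = \nu$. Finally, uniqueness of the Furstenberg system follows because the convergence in \eqref{corFsyshyp} takes place along the standard intervals $[N]$, so every weak$^{*}$ cluster point of $(\E_{n \in [N]}\delta_{T^n a})_N$ produces the same measure $\nu$. The argument is essentially a moment-matching exercise; there is no serious obstacle, only the bookkeeping of the mixed case noted above.
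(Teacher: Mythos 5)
Your proof is correct and follows essentially the same route as the paper: both compute that the product measure $\otimes_{n\in\Z} m_{\text{Leb}}$ annihilates the test functions $T^{h_1}F^{\varepsilon_1}\cdots T^{h_r}F^{\varepsilon_r}$ for $(\varepsilon_1,\dots,\varepsilon_r)\neq\vec{0}$, identify the corresponding integrals against $\mu$ with the correlation averages from \eqref{corFsyshyp}, and invoke Proposition~\ref{Fsystemchar} to force $\mu=\otimes_{n\in\Z} m_{\text{Leb}}$, with uniqueness coming from convergence along the full intervals $[N]$. Your extra remark about the mixed case is harmless but redundant, since vectors with some (but not all) vanishing $\varepsilon_i$ are already covered by the hypothesis.
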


\begin{proof}
Observe that for all $k \in \Z \setminus \{0\}$ we have $\int_{\S^1} z^k \ dm_{\text{Leb}}(z)=0$. With the notation as in Proposition~\ref{Fsystemchar} above, it is clear that the Bernoulli measure on $(\S^1)^{\Z}$ satisfies
\[
\int_X T^{h_1}F^{\varepsilon_1}\cdots T^{h_r} F^{\varepsilon_r} \ d\otimes_{n \in \Z} m_{\text{Leb}} = 0,
\]
provided the vector $(\varepsilon_1,\dots,\varepsilon_r) \neq \vec{0}$. That the equality above holds for $\mu$ is a trivial consequence of \eqref{corFsyshyp}, so by Proposition~\ref{Fsystemchar}, the two must coincide.
\end{proof}

\subsection{Hilbert space splittings}

We first recall a Hilbert space splitting theorem and a version of the classical Bochner-Herglotz theorem which we will use for the space $L^2(\mu).$

\begin{theorem}[\cite{Ber96}]\label{T:split}
For $r\in \mathbb{N}$ let $U_1,\ldots,U_r$ be commuting unitary operators on a Hilbert space $(\mathcal{H},\norm{\cdot}).$ If 
\[\mathcal{H}_{\text{inv}}:=\{f\in \mathcal{H}:\;U_i f=f\;\text{for all}\;1\leq i\leq r\},\] and 
\[\mathcal{H}_{\text{erg}}:=\left\{f\in \mathcal{H}:\;\lim_{N_1,\ldots,N_r\to\infty}\norm{\frac{1}{N_1\cdots N_r}\sum_{n_1=1}^{N_1}\cdots\sum_{n_r=1}^{N_r}U_1^{n_1}\cdots U_r^{n_r}f}=0\right\},\] then
\[\mathcal{H}=\mathcal{H}_{\text{inv}}\oplus \mathcal{H}_{\text{erg}}.\]
\end{theorem}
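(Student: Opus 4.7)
The plan is to reduce Theorem~\ref{T:split} to a multiparameter mean ergodic theorem that identifies the Cesàro limit of the joint averages with orthogonal projection onto $\mathcal{H}_{\text{inv}}$. Let $P$ denote the orthogonal projection of $\mathcal{H}$ onto $\mathcal{H}_{\text{inv}}$. I would aim to prove that for every $f\in\mathcal{H}$ the average
\[
M_{N_1,\ldots,N_r}(f) := \frac{1}{N_1\cdots N_r}\sum_{n_1=1}^{N_1}\cdots\sum_{n_r=1}^{N_r} U_1^{n_1}\cdots U_r^{n_r} f
\]
converges in norm to $Pf$ as $N_1,\ldots,N_r\to\infty$. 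Once this is done, $\mathcal{H}_{\text{erg}}$ is by definition $\ker P=\mathcal{H}_{\text{inv}}^{\perp}$, and the orthogonal decomposition $\mathcal{H}=\mathrm{Im}\,P\oplus\ker P$ yields exactly $\mathcal{H}=\mathcal{H}_{\text{inv}}\oplus\mathcal{H}_{\text{erg}}$.

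The first step is to verify orthogonality of $\mathcal{H}_{\text{inv}}$ and $\mathcal{H}_{\text{erg}}$ directly. For $f\in\mathcal{H}_{\text{inv}}$ and $g\in\mathcal{H}_{\text{erg}}$, unitarity together with $U_if=f$ forces $U_i^{-1}f=f$, so $\langle f,\,U_1^{n_1}\cdots U_r^{n_r}g\rangle=\langle f,g\rangle$ for every $(n_1,\ldots,n_r)\in\N^r$. Averaging yields $\langle f,M_{N_1,\ldots,N_r}(g)\rangle=\langle f,g\rangle$; passing to the limit and using Cauchy--Schwarz together with the defining property of $\mathcal{H}_{\text{erg}}$ forces $\langle f,g\rangle=0$.

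Next I would identify the limit operator. For each $i$ let $P_i$ denote the orthogonal projection onto $\ker(U_i-I)$. The classical one-variable von Neumann mean ergodic theorem gives $\frac{1}{N_i}\sum_{n_i=1}^{N_i}U_i^{n_i}h\to P_ih$ in norm for every $h\in\mathcal{H}$. Commutativity of the $U_i$ transfers to commutativity of the $P_i$ (by uniqueness of the mean ergodic limit computed in two different orders), and a pair of commuting orthogonal projections has product equal to the orthogonal projection onto the intersection of their ranges. Iterating, $P_1\cdots P_r$ is the orthogonal projection onto $\bigcap_i\ker(U_i-I)=\mathcal{H}_{\text{inv}}$, which is precisely the desired $P$.

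The main technical step, and the only one requiring real work, is passing from the one-variable limits to the joint limit $M_{N_1,\ldots,N_r}(f)\to Pf$. I would argue by induction on $r$. Writing $M_{i,N_i}:=\frac{1}{N_i}\sum_{n_i=1}^{N_i}U_i^{n_i}$, noting that $\|M_{i,N_i}\|\le 1$ and that these partial averages all commute, I would decompose
\[
M_{1,N_1}\cdots M_{r,N_r}f-Pf=M_{r,N_r}\bigl(M_{1,N_1}\cdots M_{r-1,N_{r-1}}f-P_1\cdots P_{r-1}f\bigr)+(M_{r,N_r}-P_r)\bigl(P_1\cdots P_{r-1}f\bigr).
\]
By contractivity of $M_{r,N_r}$, the first summand has norm at most that of the inductive quantity and hence vanishes jointly in $N_1,\ldots,N_{r-1}$; the second summand is a one-variable ergodic average applied to the \emph{fixed} vector $P_1\cdots P_{r-1}f$ and vanishes as $N_r\to\infty$ by the one-variable mean ergodic theorem. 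The uniform contractivity bound is exactly what decouples the convergence in the different coordinates so that the joint limit closes; this is the main point I would be careful about, but it is a standard manoeuvre once commutativity of the averages is in hand.
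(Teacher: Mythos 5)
Your argument is correct. Note that the paper itself offers no proof of this statement: it is quoted from \cite{Ber96} as a known Hilbert space splitting theorem, so there is no internal proof to compare against; what you have written is the standard self-contained argument, namely the multiparameter von Neumann mean ergodic theorem ($M_{1,N_1}\cdots M_{r,N_r}f\to P_1\cdots P_rf$) plus the observation that the joint Ces\`aro limit being the orthogonal projection onto $\mathcal{H}_{\text{inv}}$ immediately identifies $\mathcal{H}_{\text{erg}}$ with $\ker P=\mathcal{H}_{\text{inv}}^{\perp}$. Your inductive decomposition is sound: the first summand is controlled uniformly in $N_r$ by contractivity of $M_{r,N_r}$, and the second is a one-variable average of the fixed vector $P_1\cdots P_{r-1}f$, so the joint limit closes. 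The only place I would tighten the wording is the commutativity of the projections $P_i$: rather than appealing to ``uniqueness of the mean ergodic limit computed in two different orders'' (iterated limits in different orders agreeing is exactly what needs justification), simply pass to the strong limit in the identity $U_iM_{j,M}=M_{j,M}U_i$ to get $U_iP_j=P_jU_i$, hence $M_{i,N}P_j=P_jM_{i,N}$ and, taking limits again, $P_iP_j=P_jP_i$; then the product $P_1\cdots P_r$ is the orthogonal projection onto $\bigcap_i\ker(U_i-I)=\mathcal{H}_{\text{inv}}$ as you claim. With that one-line repair the proof is complete, and the preliminary orthogonality check of $\mathcal{H}_{\text{inv}}$ and $\mathcal{H}_{\text{erg}}$, while a nice sanity check, is not even needed once the convergence statement is in hand.
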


\begin{theorem}\label{T:BH}
For $r\in \mathbb{N}$, let $U_1,\ldots,U_r$ be commuting unitary operators on a Hilbert space $\mathcal{H}$ and $f\in \mathcal{H}.$ Then, there exists a measure $\nu_f$ on $\mathbb{T}^r$ such that
\[\langle U_1^{n_1}\cdots U_r^{n_r}f,f\rangle=\int_{\mathbb{T}^r}e(\alpha_1 n_1+\cdots+ \alpha_r n_r)\;d\nu_f(\alpha_1,\ldots,\alpha_r),\] 
for all $(n_1,\ldots,n_r)\in \mathbb{Z}^r.$
\end{theorem}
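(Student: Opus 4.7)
The plan is to reduce to the classical Bochner theorem on the discrete abelian group $\mathbb{Z}^r$, whose Pontryagin dual is $\mathbb{T}^r$. The key observation is that the function $\varphi \colon \mathbb{Z}^r \to \mathbb{C}$ defined by
\[
\varphi(n_1, \ldots, n_r) := \langle U_1^{n_1} \cdots U_r^{n_r} f, f \rangle
\]
is positive definite; once this is established, the existence of $\nu_f$ as stated is immediate from Bochner.

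First I would verify positive-definiteness. Abbreviate $U^n := U_1^{n_1} \cdots U_r^{n_r}$ for $n = (n_1, \ldots, n_r) \in \mathbb{Z}^r$. Since the $U_j$ pairwise commute and each $U_j$ is unitary (so $U_j^{*} = U_j^{-1}$), for any finite collection $n^{(1)}, \ldots, n^{(k)} \in \mathbb{Z}^r$ and scalars $c_1, \ldots, c_k \in \mathbb{C}$ we have
\[
\sum_{i,j=1}^{k} c_i \overline{c_j}\, \varphi\bigl(n^{(i)} - n^{(j)}\bigr)
= \sum_{i,j} c_i \overline{c_j} \langle U^{n^{(i)}} f,\, U^{n^{(j)}} f \rangle
= \Bigl\| \sum_{i=1}^{k} c_i\, U^{n^{(i)}} f \Bigr\|^2 \ge 0.
\]
This is the only place where commutativity and unitarity enter in a nontrivial way, and it is essentially the content of the theorem packaged into a positive-definiteness statement.

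Next I would invoke Bochner's theorem for the locally compact abelian group $G = \mathbb{Z}^r$, whose dual is $\widehat{G} = \mathbb{T}^r$: every (automatically continuous, since $G$ is discrete) positive-definite function on $G$ is the Fourier transform of a unique finite positive Borel measure on $\widehat{G}$. Applying this to $\varphi$ produces a finite positive measure $\nu_f$ on $\mathbb{T}^r$ satisfying
\[
\langle U_1^{n_1} \cdots U_r^{n_r} f, f \rangle = \int_{\mathbb{T}^r} e(\alpha_1 n_1 + \cdots + \alpha_r n_r) \, d\nu_f(\alpha_1, \ldots, \alpha_r)
\]
for every $(n_1, \ldots, n_r) \in \mathbb{Z}^r$, which is exactly the claim.

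There is no real obstacle; this is a textbook application. An equally short alternative route is via the joint spectral theorem for commuting unitary operators, which furnishes a projection-valued measure $E$ on $\mathbb{T}^r$ with $U_j = \int z_j \, dE(z_1, \ldots, z_r)$, and then $\nu_f(A) := \langle E(A) f, f \rangle$ works directly by the functional calculus. I would present the positive-definiteness argument, since it is self-contained and avoids invoking the full spectral theorem for commuting normal operators.
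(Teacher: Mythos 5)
Your argument is correct: the positive-definiteness computation is valid (commutativity and unitarity give $U^{n^{(i)}-n^{(j)}}=(U^{n^{(j)}})^{*}U^{n^{(i)}}$), and Bochner's theorem for the discrete group $\mathbb{Z}^r$ with dual $\mathbb{T}^r$ then yields $\nu_f$ exactly as claimed. The paper does not prove this statement at all — it simply recalls it as "a version of the classical Bochner--Herglotz theorem" — and your proof is precisely the standard argument behind that classical result, so there is nothing to compare beyond noting that your write-up (or the joint spectral theorem alternative you mention) would serve as a complete proof.
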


A standard consequence that can be easily derived from Theorems~\ref{T:split} and \ref{T:BH} is the following.

\begin{corollary}\label{spectralET}
Let $(X,\mathcal{B},\mu,T_1,\dots,T_r)$ be a measure preserving system. Let $f \in L^2(\mu)$. Then, if we let $P$ denote the projection onto $L^2_{\text{inv}}$ it is
\[
\nu_f(\{(0,\dots,0)\})= \langle Pf, f\rangle.
\]
\end{corollary}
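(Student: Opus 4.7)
The plan is to combine the orthogonal splitting from Theorem~\ref{T:split} with the spectral representation from Theorem~\ref{T:BH}, and then to extract the mass at the origin of the spectral measure $\nu_f$ by Ces\`aro averaging in each of the $r$ variables.

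First I would use Theorem~\ref{T:split} to write $f = Pf + g$ where $Pf \in L^2_{\text{inv}}$ and $g \in L^2_{\text{erg}}$, the decomposition being orthogonal. Since $U_i Pf = Pf$ for every $i$, and $\langle Pf, g\rangle = 0$, a direct expansion gives
\[
\langle U_1^{n_1}\cdots U_r^{n_r} f, f\rangle \;=\; \norm{Pf}_2^2 \,+\, \langle U_1^{n_1}\cdots U_r^{n_r} g, g\rangle
\]
for every $(n_1,\dots,n_r)\in\Z^r$, because the two cross terms $\langle U_1^{n_1}\cdots U_r^{n_r}Pf, g\rangle = \langle Pf, g\rangle$ and its conjugate both vanish.

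Next I would average both sides of the identity in Theorem~\ref{T:BH} over $(n_1,\dots,n_r) \in [N_1]\times\cdots\times[N_r]$. On the left, the definition of $L^2_{\text{erg}}$ together with the Cauchy–Schwarz inequality forces
\[
\E_{n_1\in[N_1]}\cdots\E_{n_r\in[N_r]}\langle U_1^{n_1}\cdots U_r^{n_r}g,g\rangle \longrightarrow 0
\]
as $N_1,\dots,N_r\to\infty$, so the averaged left-hand side converges to $\norm{Pf}_2^2$. On the right, one obtains
\[
\int_{\T^r} \prod_{i=1}^{r} \E_{n_i\in[N_i]} e(\alpha_i n_i)\, d\nu_f(\alpha_1,\dots,\alpha_r),
\]
whose integrand is bounded by $1$ in modulus and converges pointwise to the indicator $\one_{\{(0,\dots,0)\}}$ (since the one-dimensional Ces\`aro averages of $e(\alpha_i n_i)$ tend to $0$ for $\alpha_i \notin \Z$ and are identically $1$ for $\alpha_i \in \Z$). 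By dominated convergence this tends to $\nu_f(\{(0,\dots,0)\})$.

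Equating the two limits yields $\nu_f(\{(0,\dots,0)\}) = \norm{Pf}_2^2$, and the proof concludes by noting $\norm{Pf}_2^2 = \langle Pf, Pf + (f-Pf)\rangle = \langle Pf, f\rangle$, using again the orthogonality of the splitting. There is no real obstacle here; the only point to handle with care is the justification of the pointwise limit and the application of dominated convergence in the $r$-fold average on the spectral side.
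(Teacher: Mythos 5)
Your argument is correct and is exactly the derivation the paper intends: it only gestures that the corollary follows from Theorems~\ref{T:split} and \ref{T:BH}, and your combination of the orthogonal splitting, Ces\`aro averaging of $\langle U_1^{n_1}\cdots U_r^{n_r}f,f\rangle$, and dominated convergence on the spectral side is the standard way to carry that out. The only cosmetic point is that the second cross term is $\langle U_1^{n_1}\cdots U_r^{n_r}g,Pf\rangle=\langle g,Pf\rangle=0$ by invariance of $Pf$ under the $U_i^{-1}$, rather than literally the conjugate of the first, but this does not affect the proof.
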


\subsection{Equidistribution} We recall now some basic definitions of equidistribution that we shall use in the sequel.



\begin{definition}
We say that a sequence $(x_n=(x_n^1,\ldots, x_n^r))_n \subseteq \R^r$ is \emph{equidistributed} on $\T^r$ if, for all Riemann integrable coordinate-wise $1$-periodic functions $f$ we have
\[
\lim_{N \to \infty} \E_{n \in [N]} f(\{x_n\}) = \int_{\T^r} f(x) \ dm_{\text{Leb}}(x),
\] where $\{x_n\}=(\{x_n^1\},\ldots, \{x_n^r\}).$
\end{definition}

We state next a very useful result of Weyl that allows one to easily check equidistribution of sequences.

\begin{theorem}[Weyl's criterion for equidistribution, \cite{W16}]\label{T: Weyl}
A sequence $(x_n)_n \subseteq \R^r$ is equidistributed on $\T^r$ if and only if for every $\vec{k} \in \Z^r \setminus \{\vec{0}\}$ we have
\[
\lim_{N \to \infty} \E_{n \in [N]} e(\vec{k}  \cdot x_n) = 0.
\]
\end{theorem}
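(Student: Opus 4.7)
\emph{Proof plan for Theorem~\ref{T: Weyl}.} The forward direction is essentially free: the function $f(x) := e(\vec{k} \cdot x)$ is continuous and coordinate-wise $1$-periodic, hence Riemann integrable on $\T^r$, and for $\vec{k} \in \Z^r \setminus \{\vec{0}\}$ its integral against Lebesgue measure vanishes. Applying the defining property of equidistribution to this $f$ yields exactly the stated exponential sum estimate, so only the converse requires real work.

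For the converse, the plan is to bootstrap from exponentials to Riemann integrable functions in two stages. First, by linearity of both the averaging operator $\E_{n \in [N]}$ and the integral $\int_{\T^r} \cdot\, dm_{\text{Leb}}$, the vanishing hypothesis together with the trivial case $\vec{k} = \vec{0}$ (where the average equals $1$ and the integral also equals $1$) immediately implies the equidistribution equality for every trigonometric polynomial $P(x) = \sum_{\vec{k}} c_{\vec{k}} e(\vec{k} \cdot x)$ with finite spectrum. Second, given an arbitrary continuous coordinate-wise $1$-periodic function $g \colon \R^r \to \C$, I would invoke the (multidimensional) Weierstrass approximation theorem to produce, for each $\veps > 0$, a trigonometric polynomial $P$ with $\norm{g - P}_\infty < \veps$. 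A standard three-$\veps$ argument, comparing $\E_{n \in [N]} g(\{x_n\})$ with $\E_{n \in [N]} P(\{x_n\})$ and $\int g$ with $\int P$, then pushes the equidistribution equality to all continuous periodic $g$.

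Finally, to reach Riemann integrable $f$, I would use the standard sandwiching construction: by definition of Riemann integrability on the torus, for every $\veps > 0$ one can find continuous periodic functions $g_- \le f \le g_+$ with $\int_{\T^r}(g_+ - g_-)\, dm_{\text{Leb}} < \veps$. Applying the continuous case already established to both $g_-$ and $g_+$, and using the monotonicity
\[
\E_{n \in [N]} g_-(\{x_n\}) \;\le\; \E_{n \in [N]} f(\{x_n\}) \;\le\; \E_{n \in [N]} g_+(\{x_n\}),
\]
one sandwiches $\limsup_N$ and $\liminf_N$ of $\E_{n \in [N]} f(\{x_n\})$ between $\int g_-$ and $\int g_+$, hence within $\veps$ of $\int f$. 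Letting $\veps \to 0$ finishes the proof.

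The only delicate point is the last step: constructing the continuous periodic minorant and majorant $g_\pm$ for a general Riemann integrable periodic $f$. This is routine on $\T^1$ (mollify the characteristic functions of boxes in a partition witnessing Riemann integrability) but requires a little care in $\T^r$ to preserve periodicity and control the $L^1$ gap. Once this is done, the remainder is a clean three-$\veps$ argument, and the theorem follows.
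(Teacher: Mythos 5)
The paper does not prove this statement at all: Theorem~\ref{T: Weyl} is quoted as the classical Weyl criterion with a citation to \cite{W16}, so there is no internal proof to compare against. Your sketch is the standard and correct argument for it: the forward direction by testing the definition of equidistribution on $x\mapsto e(\vec{k}\cdot x)$ (using that $e(\vec{k}\cdot\{x_n\})=e(\vec{k}\cdot x_n)$ for integer $\vec{k}$), the converse by linearity for trigonometric polynomials, a three-$\veps$ argument with Weierstrass approximation for continuous periodic functions, and sandwiching for Riemann integrable ones. Two small points to tidy up if you write it in full: the sandwiching step as stated applies to real-valued $f$, so a complex-valued Riemann integrable $f$ should first be split into real and imaginary parts; and the construction of the continuous minorant/majorant $g_\pm$ you flag is indeed the only technical step, done in the usual way by trapping $f$ between step functions subordinate to a partition witnessing Riemann integrability and then replacing the step functions by piecewise-linear (or mollified) periodic functions at an arbitrarily small cost in $L^1$. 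With those details filled in, the proof is complete.
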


We also recall a stronger equidistribution notion from \cite{Fr16}.\footnote{ The notion was defined in \cite{Fr16} for sequences of real variable polynomials.}

\begin{definition}\label{D: good}
We say that a sequence $(x_n)_n \subseteq \R$ is \emph{good} if for every $\alpha\in\R\setminus\{0\}$ we have
\[
\lim_{N \to \infty} \E_{n \in [N]} e(\alpha\cdot x_n)=0.
\] 
A sequence $(x_n=(x_n^1,\ldots, x_n^r))_n \subseteq \R^r$ is \emph{good} if every non-trivial linear combination of the
sequences $(x_n^1)_n, \ldots, (x_n^r)_n$ is good.
\end{definition}

We remark that via Theorem~\ref{T: Weyl} it is immediate that every good sequence is equidistributed. As an example, the sequence $(\sqrt{17}n)_n$ is equidistributed but not good, which means that the latter notion is a strictly stronger than the former.

Finally, we need an equidistribution notion for certain finite abelian groups.

\begin{definition}
Let $m \in \N$ with $m \geq 2$, and consider the set $X=\{0,1/m,2/m,\dots,(m-1)/m\}$. We say that a sequence $(x_n)_n \subseteq X$ is \emph{equidistributed} on $X$ if for every function $f$ on $X$, it is
\[
\lim_{N \to \infty} \E_{n \in [N]} f(x_n) = \int_X f \ d\mu,
\]
where $\mu$ is the Haar measure on $X$.
\end{definition}





\section{Bounds for the functions of interest}\label{Sec:proofs}
In this section we provide all the ingredients towards the exponential sum results that are the backbone of our main results.
Once again, $G(x)=x^{\log^c x},$ $0<c< 1/2,$ throughout the section. 

 We consider the sequence $a(n):=e(G(n))$ in $\S^1$. It is very important to remark that in all the statements proven below, we first pick the shifts $h_j$ and the accompanying coefficients $\alpha_j$.


In order to prove Theorem~\ref{SPexpsum}, we will follow \cite{bp} and adapt it to our context.

Given $r \in \N$, distinct $h_1,\dots,h_r \in \Z$, and $(\alpha_1,\dots,\alpha_r) \in \R^r \setminus \{\vec{0}\}$, we set
\[F(x)=F_{h_1,\dots,h_r,\alpha_1,\dots,\alpha_r,c}(x):=\sum_{j=1}^r \alpha_j\cdot G(x+h_j);\]
a piece of notation that will be used throughout the section.


The following theorem, the proof of which is at the end of the section, gives us the equidistribution of the seqeunce $((G(n+h_1),\ldots, G(n+h_r)))_n$ of Theorem~\ref{T: equi}.

\begin{theorem}\label{thm2aif98}
Let $0<c<1/2$. Given $0<\gamma<1$ and $0<C<c$, there exists a constant $\kappa>0$ such that uniformly in $G(2N)^{-\gamma} \leq |\alpha| \leq G(N)^C$, for $N$ big enough one has
\[
\sum_{N+a \leq n \leq 2N-a} e(\alpha F(n)) \ll N\exp\left(-\kappa\log^{1-2c} N\right),
\]
where $a=\max\{ |h_i|: 1 \leq i \leq r\}$.
\end{theorem}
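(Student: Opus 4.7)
My plan is to mimic the proof of \cite[Theorem~2]{bp} (which handles the single-sequence exponential sum $\sum e(\alpha G(n))$), modifying the derivative analysis to accommodate the linear combination $F(x)=\sum_{j=1}^{r}\alpha_j G(x+h_j)$. The first preparatory step is to establish precise estimates for the higher derivatives of $G$. Writing $G(x)=\exp(\log^{c+1}x)$ and setting $\psi(x):=(c+1)\log^c(x)/x$, a Faà di Bruno-type induction yields, uniformly for $x$ large and $k$ in a suitable range,
\[
|G^{(k)}(x)|\asymp_{k} G(x)\,\psi(x)^{k}.
\]

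The second step transfers these estimates to $F$. Taylor-expanding each $G(x+h_j)$ around $x$ and regrouping gives
\[
F(x)=\sum_{m\geq 0}\frac{\beta_m}{m!}\,G^{(m)}(x),\qquad \beta_m:=\sum_{j=1}^{r}\alpha_j h_j^{m}.
\]
Since the $h_j$ are pairwise distinct, the associated Vandermonde matrix is non-singular, and $(\alpha_1,\dots,\alpha_r)\neq\vec{0}$ by hypothesis, so there is a minimal $m_0\in\{0,1,\dots,r-1\}$ with $\beta_{m_0}\neq 0$. Differentiating term-by-term $k$ times and bounding the tail (using $G^{(k+m)}(x)/G^{(k+m_0)}(x)\asymp\psi(x)^{m-m_0}\to 0$ as $x\to\infty$), one obtains
\[
|F^{(k)}(x)|\asymp |\beta_{m_0}|\,G(x)\,\psi(x)^{k+m_0},
\]
where the implicit constants depend on the fixed data $h_j,\alpha_j,c$ and on $k$. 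This is the precise analog of the derivative estimate used in \cite{bp}, with the roles of $G^{(k)}$ and $G(x)\psi(x)^k$ replaced by $F^{(k)}$ and $G(x)\psi(x)^{k+m_0}$ up to a nonzero multiplicative constant.

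With these estimates in hand, the proof proceeds as in \cite{bp}: a $k$-th derivative exponential sum test (Van der Corput's theorem or an iterated Weyl-differencing argument) is applied to the phase $\alpha F$ on $[N+a,2N-a]$, with $k$ chosen as a slowly growing function of $N$. The assumption $G(2N)^{-\gamma}\leq|\alpha|\leq G(N)^{C}$ places $|\alpha F^{(k)}(x)|$ exactly in the window where the derivative test yields the required stretched-exponential saving $\exp(\kappa\log^{1-2c}N)$, the exponent $1-2c$ arising from the constraint $0<c<1/2$ (and being the same one that controls the single-sequence case).

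The main obstacle is uniformity: one must verify that, as $k$ grows with $N$, the tail of the Taylor expansion in Step~2 does not spoil the lower bound on $|F^{(k)}|$, and that replacing $\alpha G$ by $\alpha F$ in the iteration of \cite{bp} introduces only constants absorbable into $\ll$. Both reduce to the observation that the modification amounts to a multiplicative constant $|\beta_{m_0}|/m_0!$ (depending solely on the fixed data $h_j,\alpha_j$) together with a bounded shift $m_0\leq r-1$ of the effective derivative order, neither of which disturbs the final saving.
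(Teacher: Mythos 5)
There is a genuine gap, and it lies in the exponential--sum tool you invoke at the end. You propose to finish with ``a $k$-th derivative exponential sum test (Van der Corput's theorem or an iterated Weyl-differencing argument)'' with $k$ a slowly growing function of $N$. But in this problem $k$ is forced to be of size about $\log^c N$: since $|\alpha|$ can be as large as $G(N)^C=\exp\bigl(C\log^{c+1}N\bigr)$ and $|F^{(k)}(x)|/k!$ is roughly $G(x)\,x^{-k}$ up to logarithmic factors, one needs $k\gtrsim \log^c N$ just to bring $|\alpha F^{(k)}(x)/k!|$ down into the window where any derivative test is nontrivial. The classical van der Corput/Weyl $k$-th derivative test loses a factor of order $2^{-k}$ in the saving exponent, so with $k\asymp\log^c N$ it yields a saving of at most $\exp\bigl(-O(\log N\cdot 2^{-\log^c N})\bigr)$, which is far weaker than the required $\exp\bigl(-\kappa\log^{1-2c}N\bigr)$ (indeed the exponent tends to $0$ faster than any power of $\log N$). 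This is precisely why the paper does not use van der Corput at all: it verifies that $\alpha F$ lies in Karacuba's class $\mathcal{F}(N,\theta,\Theta,\Theta_0,\delta)$ and applies Karacuba's theorem (Lemma 10 of \cite{bp}), whose loss is only polynomial in $k$, namely $N^{-c_1/k^2}$, so that $k\asymp\log^c N$ gives exactly $N\exp(-\kappa\log^{1-2c}N)$. Without replacing your tool by a Vinogradov/Karacuba-type estimate, the quantitative claim of the theorem cannot be reached by your route.

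A secondary, but still essential, issue is the uniformity you flag and then dismiss. Your derivative estimates are stated with constants ``$\asymp_k$'' depending on $k$, while $k\to\infty$ with $N$; everything hinges on those constants growing at most like $C^k$ (so that they are negligible against $N^{\epsilon s}$ for $s\le(c+1)\log^c N$), and on the Taylor tail in your Step 2 being controlled uniformly in the differentiation order. The paper does this with explicit bounds: the upper bound $|F^{(s)}(x)|/s!\le C(2/N)^sG(3N)$ via Cauchy's formula, and the lower bound $F^{(s)}(x)/s!\ge d\,G(x)/x^{s+\tau}$ via a change of basis into finite differences ($F^{(s)}=\sum_i d_i\Delta^iG^{(s)}$, with $\Delta^kG^{(s)}$ sandwiched between $G^{(s+k)}(x)$ and $G^{(s+k)}(x+k)$), where the error terms are shown to vanish with constants depending only on $c,h_j,\alpha_j$ and with factors like $4^{s}$ harmless because $s\le(c+1)\log^c N$. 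Your Taylor/Vandermonde identification of the leading index $m_0$ is a reasonable alternative to the paper's finite-difference reduction (the two are essentially equivalent, $\tau=m_0$), but as written the tail bound uniform in $k$ is asserted rather than proved, and it is exactly the technical heart of the argument.
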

The main tool we will use to prove Theorem~\ref{thm2aif98} is the following result due to Karacuba (\cite[Theorem 1]{Ka71}), which we restate in the more convenient form given in \cite{bp} below.
\begin{lemma}[Lemma 10, \cite{bp}]
\label{lem10aif98}
Let $\theta, \Theta,\Theta_0, \delta$ be real numbers satisfying the inequalities
\[
0<\delta<1 \quad \text{and} \quad 0<\Theta<\theta<\Theta_0<1,
\]
and let $\mathcal{F}=\mathcal{F}(N,\theta,\Theta,\Theta_0,\delta)$ be the set of all functions $f(x)$ for which there exists $k \geq 3$ such that $f$ is $k$-times differentiable on $[N,2N]$,
\[
\left| \frac{f^{(k)}(x)}{k!} \right| \leq N^{-\Theta_0k} \quad(N < x \leq 2N)
\]
and there exists a set $S \subseteq \{1,2,\dots,k-1\}$ with cardinality at least $\delta k$ and
\[
N^{-\theta s} \leq \left| \frac{f^{(k)}(x)}{k!} \right| \leq N^{-\Theta s} \quad (N < x \leq 2N)
\]
for all $s \in S$. Then, there exist constants $B, c_1>0$ depending only on $\theta,\Theta,\Theta_0$, and $\delta$, such that for any $f \in \mathcal{F}$ one has
\[
\left| \sum_{N \leq n \leq 2N} e(f(n))\right| \leq BN^{1-\frac{c_1}{k^2}}.
\]
\end{lemma}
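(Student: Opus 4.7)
Since this lemma is a classical result of Karacuba (Theorem~1 of \cite{Ka71}, repackaged in \cite{bp}), the plan is to sketch the standard Vinogradov--Karacuba argument and indicate which features of the hypothesis drive the exponent $1-c_1/k^2$.

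The first step is to reduce the smooth phase to polynomial phases on short subintervals. Fix a length parameter $H=\lfloor N^{\eta}\rfloor$ with $\eta\in(0,\Theta_0)$ to be optimized at the end, partition $[N,2N]$ into $M\asymp N/H$ consecutive blocks $[n_j,n_j+H)$, and write a typical point as $n_j+m$ with $0\le m<H$. Taylor-expanding to order $k-1$ yields $f(n_j+m)=P_j(m)+R_j(m)$, where $P_j(m)=\sum_{s=0}^{k-1}\frac{f^{(s)}(n_j)}{s!}\,m^s$ and $|R_j(m)|\le \frac{H^k}{k!}\max_{[N,2N]}|f^{(k)}|\le (H/N^{\Theta_0})^k$. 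With $\eta<\Theta_0$ the remainder is $o(1)$, so $e(f(n_j+m))$ may be replaced by $e(P_j(m))$ up to a negligible error, and the task reduces to bounding each polynomial exponential sum $\sum_{m=0}^{H-1}e(P_j(m))$.

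The second step is to invoke Vinogradov's mean value theorem on these polynomial sums. Set $\alpha_s:=f^{(s)}(n_j)/s!$. The hypothesis guarantees that for every index $s$ in the set $S\subseteq\{1,\dots,k-1\}$ of density at least $\delta$ the coefficient $\alpha_s$ is pinned to the narrow dyadic range $[N^{-\theta s},N^{-\Theta s}]$; after rescaling $m\mapsto m/H$ this translates into a controlled Diophantine condition on a positive proportion of the coefficients of $P_j$. Vinogradov's mean value theorem in Karacuba's quantitative form then yields, uniformly in $n_j$, a bound of the shape $|\sum_{m=0}^{H-1}e(P_j(m))|\ll H^{\,1-c_1'/k^2}$ for a constant $c_1'=c_1'(\theta,\Theta,\Theta_0,\delta)>0$.

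Summing over the $M\asymp N/H$ blocks gives $|\sum_{N\le n\le 2N}e(f(n))|\ll N\cdot H^{-c_1'/k^2}=N^{1-\eta c_1'/k^2}$, which is the claimed bound with $c_1:=\eta c_1'$ and $B$ absorbing the implicit constants. The main obstacle in turning this sketch into a rigorous proof is extracting from Vinogradov's mean value theorem a saving $c_1'/k^2$ whose numerator is independent of $k$ and depends only on the shape parameters $\theta,\Theta,\Theta_0,\delta$; this is precisely the point of Karacuba's refinement, in which the lower bound $|S|\ge\delta k$ on the population of "active" derivative indices (rather than control on a single derivative, which would only deliver van der Corput type savings in $k$) prevents the constant from degenerating as $k$ grows. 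Since the lemma enters our arguments purely as a black box, I would ultimately defer to \cite{Ka71} for the technical verification of this uniformity.
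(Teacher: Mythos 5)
The paper gives no proof of this lemma at all: it is imported verbatim from \cite{bp} as a restatement of Karacuba's Theorem~1 in \cite{Ka71}, so your choice to sketch the classical argument (Taylor expansion on blocks of length $N^{\eta}$, reduction to degree-$(k-1)$ Weyl sums, Vinogradov--Karacuba mean value estimates) and then defer to \cite{Ka71} for the uniform $c_1/k^2$ saving is essentially the same treatment as in the paper, which uses the lemma purely as a black box. Your outline is a fair description of how the cited result is actually proved; the only detail worth flagging is that the block length must satisfy $\theta<\eta<\Theta_0$ (not merely $\eta<\Theta_0$) so that the rescaled coefficients $\alpha_s H^s$ for $s\in S$ lie in the oscillatory range, but this is precisely the sort of point settled in the reference you invoke.
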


Therefore, our goal will be to find suitable $\theta, \Theta, \Theta_0, \delta$ that exhibit that our function $F(x)$ is in $\mathcal{F}$ (regardless of the choice of $0<c<1/2$).

The following two lemmas, Lemma~14 and Lemma~15 from \cite{bp}, that we will repeatedly use, give explicit bounds on the derivatives of $G(x)$. We restate them here for the convenience of the reader.

\begin{lemma}[Lemma~14, \cite{bp}]\label{lem14aif98} Let $0<c<1/2$. Then, there is a constant $x_0=x_0(c)$ such that for all $s \in \N$ and all $x \geq x_0$ with $(c+1) \log^{c} x\geq s$ one has
\[
\frac{G^{(s)}(x)}{s!} \geq\frac{G(x)}{2x^s}.
\]
\end{lemma}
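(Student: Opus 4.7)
The plan is to write $G(x)=e^{f(x)}$ with $f(x)=\log^{c+1}x$, expand $G^{(s)}(x)$ via Faà di Bruno's formula, and identify a dominant contribution that is comparable to $G(x)\binom{\phi}{s}/x^s$, where $\phi:=(c+1)\log^c x$. The hypothesis $\phi\ge s$ will then force $\binom{\phi}{s}\ge 1$, from which the desired lower bound $G^{(s)}(x)/s!\ge G(x)/(2x^s)$ follows once the small error terms are controlled.

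First I would derive a sharp asymptotic for $f^{(j)}$. Writing $f=F\circ\log$ with $F(u)=u^{c+1}$ and using the standard identity
\[
\frac{d^j}{dx^j}F(\log x)=\frac{1}{x^j}\sum_{k=1}^{j}s(j,k)\,F^{(k)}(\log x),
\]
where $s(j,k)$ is a signed Stirling number of the first kind, one obtains
\[
f^{(j)}(x)=\frac{1}{x^j}\sum_{k=1}^{j}s(j,k)\,(c+1)^{\underline{k}}\log^{c+1-k}x.
\]
For $j\le s\le\phi\le\log x$, the $k=1$ term dominates: $|s(j,k)|$ grows only polylogarithmically in $j$ relative to $|s(j,1)|=(j-1)!$, while each increment of $k$ costs a factor of $\log x$ via $\log^{c+1-k}x$. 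Extracting the leading contribution yields
\[
\frac{f^{(j)}(x)}{j!}=(-1)^{j-1}\frac{\phi}{jx^j}\,(1+\delta_j),\qquad |\delta_j|=O_{c}\!\bigl(\tfrac{\log\log x}{\log x}\bigr),
\]
uniformly in $1\le j\le s$.

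Next I would plug this into Faà di Bruno. Letting $K:=\sum_j k_j$ and using $\sum_j(j-1)k_j=s-K$ to track signs,
\[
\frac{G^{(s)}(x)}{s!\,G(x)}=\frac{1}{x^s}\sum_{\substack{k_1,\dots,k_s\ge 0\\ \sum jk_j=s}}\frac{(-1)^{s-K}\phi^{K}}{\prod_j k_j!\,j^{k_j}}\prod_j(1+\delta_j)^{k_j}.
\]
The key algebraic input is the coefficient identity
\[
\sum_{\lambda\vdash s}\frac{t^K}{\prod_j k_j!\,j^{k_j}}=[z^s](1-z)^{-t}=\binom{t+s-1}{s},
\]
which at $t=-\phi$, after a sign manipulation using $\binom{-\phi+s-1}{s}=(-1)^s\binom{\phi}{s}$, gives $\sum_{\lambda\vdash s}(-1)^{s-K}\phi^K/\prod_j k_j!\,j^{k_j}=\binom{\phi}{s}$. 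The multiplicative error satisfies $\prod_j(1+\delta_j)^{k_j}\in[(1-\eta)^s,(1+\eta)^s]$ with $\eta:=\max_j|\delta_j|$, and $s\eta\le\phi\eta=O_c(\log\log x/\log^{1-c}x)=o(1)$ as $x\to\infty$ (the condition $c<1/2$ is more than enough here). Thus
\[
\frac{G^{(s)}(x)}{s!}=(1+o(1))\,\frac{\binom{\phi}{s}\,G(x)}{x^s},
\]
and since $\phi\mapsto\binom{\phi}{s}$ is nondecreasing on $[s,\infty)$ (its derivative is a sum of positive terms) with $\binom{s}{s}=1$, one concludes $\binom{\phi}{s}\ge 1$, and the right-hand side exceeds $G(x)/(2x^s)$ for $x\ge x_0(c)$.

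The main technical obstacle is the first step: establishing the asymptotic for $f^{(j)}(x)/j!$ with an error bound that is \emph{uniform} in $j\le s$ and sharp enough so that the cumulative multiplicative perturbation $\prod_j(1+\delta_j)^{k_j}$ remains $1+o(1)$ across all partitions of $s$ simultaneously. It is precisely this uniformity requirement, together with the hypothesis $s\le(c+1)\log^c x$, that ties the admissible range of $s$ to the decay rate of the subleading corrections coming from the Stirling-number expansion of $f^{(j)}$.
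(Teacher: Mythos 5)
Note first that the paper does not prove this lemma at all: it is quoted verbatim from \cite{bp} (Lemma~14 there) ``for the convenience of the reader,'' so there is no internal proof to compare your argument against; I can only assess it on its own terms. Your first two steps are essentially correct: the Stirling-number expansion does give $f^{(j)}(x)/j!=(-1)^{j-1}\tfrac{\phi}{jx^j}(1+\delta_j)$ with $|\delta_j|=O_c(\log\log x/\log x)$ uniformly for $j\le (c+1)\log^c x$, and the partition identity evaluating the unperturbed Bell-polynomial sum to $\binom{\phi}{s}$ is right (a sanity check: for constant $\phi$, i.e.\ $G(x)=x^{\phi}$, one has $G^{(s)}(x)/s!=\binom{\phi}{s}x^{\phi-s}$ exactly).

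The genuine gap is the last step, where you pass from ``each term is multiplied by a factor in $[(1-\eta)^s,(1+\eta)^s]$ with $s\eta=o(1)$'' to ``the sum is $(1+o(1))\binom{\phi}{s}$.'' That inference is only valid for sums of terms of a fixed sign, but your partition sum is an alternating sum (sign $(-1)^{s-K}$) with massive cancellation precisely in the range the lemma must cover, namely $s$ comparable to $\phi=(c+1)\log^c x$. Indeed, the sum of absolute values is $\binom{\phi+s-1}{s}$, which for $s$ near $\phi$ exceeds the signed value $\binom{\phi}{s}$ by a factor of order $4^{s}$: for example, when $\phi=s$ the signed sum equals $1$, while the single term $k_1=s$ already contributes $\phi^{s}/s!\approx e^{s}$. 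Consequently, the only bound your estimates yield for the perturbation is $\bigl((1+\eta)^{s}-1\bigr)\binom{\phi+s-1}{s}$, which for $s\asymp\phi$ is of order $s\eta\,4^{s}\binom{\phi}{s}$; with $s\asymp\log^{c}x$ and $\eta\asymp\log\log x/\log x$ this dwarfs the main term, since $4^{\log^{c}x}$ grows faster than any power of $\log x$ while $1/(s\eta)$ is only about $\log^{1-c}x/\log\log x$. So, as written, your argument establishes the bound only for $s=O(\sqrt{\phi})$ (where $\binom{\phi+s-1}{s}/\binom{\phi}{s}=e^{O(s^{2}/\phi)}$ stays bounded), not in the full range $s\le(c+1)\log^{c}x$. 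The uniformity of the $\delta_j$, which you single out as the main obstacle, is in fact unproblematic; what is missing is an argument that survives the sign cancellation in the Fa\`a di Bruno expansion, e.g.\ a grouping of the partition sum into controlled nonnegative blocks, an induction on $s$ with two-sided bounds, or a contour-integral argument of the kind used for the upper bound in Lemma~\ref{lem15aif98}, rather than termwise perturbation of an alternating sum.
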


\begin{lemma}[Lemma~15, \cite{bp}]\label{lem15aif98} Let $c>0$ and $N\geq 10$. Then, for all $s \in \N$ and $N<x\leq 2N$ we have
\[
\left| \frac{G^{(s)}(x)}{s!} \right| \leq 2\left(\frac{2}{N}\right)^sG(3N).
\]
\end{lemma}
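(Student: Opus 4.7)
My plan is to apply Cauchy's integral formula to the holomorphic extension of $G$. Since $G(x) = \exp(\log^{c+1} x)$ extends via the principal branch of the logarithm to a function holomorphic on the right half-plane $\{\reel z > 0\}$, for $x \in (N, 2N]$ with $N \geq 10$ I would apply Cauchy's formula on the circle $\gamma : |z - x| = N/2$, which lies in this domain since $\reel z \geq x - N/2 \geq N/2 > 1$ everywhere on $\gamma$. The standard estimate gives
\[
\left|\frac{G^{(s)}(x)}{s!}\right| \;\leq\; \frac{M}{(N/2)^s} \;=\; M\cdot\left(\frac{2}{N}\right)^s, \qquad M := \max_{z \in \gamma}|G(z)|,
\]
so the problem reduces to showing $M \leq 2\,G(3N)$.

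For $z \in \gamma$, write $\log z = u + iv$ with $u = \log|z|$ and $v = \arg z$. Since $|z| \leq x + N/2 \leq 5N/2$ one has $u \leq \log(5N/2)$; moreover, the tangent line from the origin to the disc of radius $N/2$ centered at $x \geq N$ yields $|v| \leq \arcsin(1/2) = \pi/6$. Then $|G(z)| = \exp(\reel \log^{c+1} z)$ and
\[
\reel \log^{c+1} z \;=\; (u^2 + v^2)^{(c+1)/2} \cos\bigl((c+1)\arctan(v/u)\bigr);
\]
a Taylor expansion in $v/u$ (using $|v| = O(1)$ and $u \sim \log N$) gives $\reel \log^{c+1} z \leq u^{c+1} + O((\log N)^{c-1})$, and hence $M \leq \exp\bigl((\log(5N/2))^{c+1} + O((\log N)^{c-1})\bigr)$.

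To finish, the mean value theorem applied to $t \mapsto t^{c+1}$ gives
\[
(\log 3N)^{c+1} - (\log(5N/2))^{c+1} \;\geq\; (c+1)\log(6/5)\,(\log N)^c\,(1 + o(1)),
\]
which strictly dominates the $O((\log N)^{c-1})$ correction for $N$ sufficiently large, so that $M \leq G(3N)$. The factor $2$ in the stated bound then absorbs the finitely many remaining values $10 \leq N \leq N_0$, which can be checked directly on the compact set $\gamma$.

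The main difficulty is the middle step. Because the radius of $\gamma$ is comparable to $x$, one cannot simply replace $|G(z)|$ by $G$ evaluated at $\max_\gamma|z|$ without carefully tracking the phase $v = \arg z$; the non-integer power $\log^{c+1}$ makes the calculation of $\reel \log^{c+1} z$ more delicate than in the polynomial case. The argument succeeds only because the gain $(c+1)\log(6/5)(\log N)^c$ obtained by replacing $\log(3N)$ with the smaller $\log(5N/2)$ strictly dominates the $O((\log N)^{c-1})$ phase correction.
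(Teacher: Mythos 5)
Your overall strategy is the same one used in \cite{bp} for this lemma (and reproduced in the proof of Lemma~\ref{upperbound} in this paper): Cauchy's integral formula on a contour at distance $N/2$ from $x$, the geometric bounds $|z|\le 5N/2$ and $|\arg z|\le\arcsin(1/2)=\pi/6$, and a comparison of $\max_{\gamma}|G|$ with $G(3N)$. The gap is in your last step. The Taylor expansion of $\reel\log^{c+1}z$ only yields $\max_\gamma|G|\le G(3N)$ for $N\ge N_0$, where $N_0$ depends on $c$ and on the unspecified constants hidden in your $O\bigl((\log N)^{c-1}\bigr)$ and $o(1)$ terms, whereas the lemma is asserted for every $N\ge 10$ with absolute constants. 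Your way of covering the range $10\le N\le N_0$ --- ``the factor $2$ absorbs the finitely many remaining values, which can be checked directly on the compact set $\gamma$'' --- is not a proof: the inequality $\max_\gamma|G(z)|\le 2G(3N)$ concerns a continuum of points $z$ (and an arbitrary fixed $c>0$), so it must be argued, not ``checked''; nothing in your argument guarantees a priori that the spare factor $2$ suffices there. As written, you have proved the estimate only for $N$ large in terms of $c$.

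The step can be closed, and in fact simplified, by bounding the real part by the modulus: $|G(z)|=\exp\bigl(\reel\log^{c+1}z\bigr)\le\exp\bigl(|\log z|^{c+1}\bigr)$, so it suffices to check $u^2+v^2\le(\log 3N)^2$ on $\gamma$, where $u=\log|z|\in\bigl[\log(N/2),\log(5N/2)\bigr]$ and $|v|\le\pi/6$. Since
\[
(\log 3N)^2-\bigl(\log(5N/2)\bigr)^2=\log(6/5)\bigl(\log 3N+\log(5N/2)\bigr)\ge \log(6/5)\,(\log 30+\log 25)>1.2>\Bigl(\frac{\pi}{6}\Bigr)^2
\]
for all $N\ge 10$, monotonicity of $t\mapsto t^{c+1}$ on $t>0$ gives $|\log z|^{c+1}\le(\log 3N)^{c+1}$, hence $\max_\gamma|G|\le G(3N)$ uniformly in $N\ge 10$ and in $c>0$, with no asymptotics and no residual case analysis; this is exactly the ``bound for the angle and logarithm'' invoked in the proof of Lemma~\ref{upperbound}. (Your choice of the circle of radius $N/2$ is fine, and the Cauchy estimate then does not even need the prefactor $2$ for this step; in \cite{bp} the integration is over the stadium-shaped contour around $[N,2N]$, whose length accounts for that factor.)
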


In the next two subsections we will show similar bounds to the previous two lemmas but for $F^{(s)}(x)$ in place of $G^{(s)}(x).$

\subsection{Lower bound}
Our first main goal will be the proof of a lower bound for the derivatives of $F(x)$. We shall accomplish that with the following proposition.


\begin{proposition}\label{lowerbound}
Let $0<c<1/2$ and $N \in \N$ be large enough (depending on the $h_1,\ldots, h_r, \alpha_1,\ldots, \alpha_r$, and $c$). Then, there exist $x_0, d>0$ and $\tau\in \mathbb{N}\cup\{0\}$ all depending only on $h_1,\ldots,h_r,\alpha_1,\dots,\alpha_r, c$ such that for all $x \geq x_0$ with $(c+1)\log^{c}(x+h_i) \geq s$, $i=1,\dots, r$ it is 
\[
\frac{F^{(s)}(x)}{s!} \geq \frac{d\cdot G(x)}{x^{s+\tau}} \quad (N+a < x \leq 2N-a).
\]
\end{proposition}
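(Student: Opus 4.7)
The plan is to Taylor-expand each $G^{(s)}(x+h_j)$ around $x$, isolate the first combination of the data $(\alpha_j,h_j)$ that does not vanish, and control the tail via the explicit derivative bounds of \cite{bp}. First, set $A_k:=\sum_{j=1}^r\alpha_j h_j^k$ for $k\geq 0$. Since the $h_j$ are pairwise distinct and $(\alpha_1,\dots,\alpha_r)\neq\vec{0}$, the Vandermonde matrix $(h_j^k)_{0\leq k\leq r-1,\,1\leq j\leq r}$ is invertible, and so some $A_k$ with $0\leq k\leq r-1$ is non-zero. Let $\tau\in\{0,1,\dots,r-1\}$ be the smallest such index; both $\tau$ and $|A_\tau|$ depend only on the fixed data.

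Applying Taylor's theorem to $G^{(s)}$ at $x$ with Lagrange remainder at order $\tau+1$, summing over $j$, and using $A_0=\cdots=A_{\tau-1}=0$, produces
\[
\frac{F^{(s)}(x)}{s!}\;=\;\frac{A_\tau}{s!\,\tau!}\,G^{(s+\tau)}(x)\;+\;\sum_{j=1}^r\frac{\alpha_j h_j^{\tau+1}}{s!\,(\tau+1)!}\,G^{(s+\tau+1)}(\xi_j),
\]
with $\xi_j$ between $x$ and $x+h_j$, hence $\xi_j\in(N,2N)$ for $x\in(N+a,2N-a)$.

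For the main term, Lemma~\ref{lem14aif98} applied at index $s+\tau$ (whose hypothesis $(c+1)\log^c x\geq s+\tau$ is ensured by enlarging $x_0$, since $\tau$ is a fixed constant and $\log^c(x+h_i)-\log^c x=o(1)$) yields
\[
\Bigl|\frac{A_\tau G^{(s+\tau)}(x)}{s!\,\tau!}\Bigr|\;\geq\;\frac{|A_\tau|}{\tau!}\binom{s+\tau}{\tau}\frac{G(x)}{2x^{s+\tau}}\;\geq\;\frac{|A_\tau|}{2\tau!}\cdot\frac{G(x)}{x^{s+\tau}},
\]
while Lemma~\ref{lem15aif98} at index $s+\tau+1$ bounds the remainder by $O\bigl(\binom{s+\tau+1}{\tau+1}(2/N)^{s+\tau+1}G(3N)\bigr)$ with implicit constant depending only on the fixed data. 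Dividing the remainder by the main term and using $x\asymp N$ gives a ratio of order
\[
C\,(s+\tau+1)^{\tau+1}\cdot\frac{4^{s}}{N}\cdot\frac{G(3N)}{G(N)}.
\]

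The hard part will be verifying that this ratio tends to $0$. Under the hypothesis $s\leq(c+1)\log^c(x+h_i)=O(\log^c N)$ one has $4^s=\exp(O(\log^c N))$, and the expansion $\log^{c+1}(3N)-\log^{c+1}N=O(\log^c N)$ gives $G(3N)/G(N)=\exp(O(\log^c N))$ as well, while the polynomial prefactor is of lower order. Since $c<1/2<1$, $\log^c N=o(\log N)$, so the $N^{-1}$ gain from the Lagrange remainder overpowers all the $\exp(O(\log^c N))$ factors and the ratio tends to $0$ as $N\to\infty$. For $N$ large the remainder is thus at most half of the main term, and the proposition follows with $d=|A_\tau|/(4\tau!)$. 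This last step is precisely where the subexponential character of $G(x)=\exp(\log^{c+1}x)$ and the restriction $c<1$ become essential: for genuinely exponentially growing $G$ the factor $G(3N)/G(N)$ would swamp the $N^{-1}$ gain and the whole approach would collapse.
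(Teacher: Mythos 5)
Your argument is correct and reaches the same conclusion as the paper, but via a genuinely different decomposition. The paper first reduces to non-negative shifts and performs a change of basis into finite differences (Lemma~\ref{changeofbasis}), writing $F^{(s)}=\sum_i d_i\Delta^iG^{(s)}$, takes $\tau$ to be the first index with $d_\tau\neq 0$, and sandwiches each $\Delta^kG^{(s)}$ between $G^{(s+k)}(x)$ and $G^{(s+k)}(x+k)$ via the integral representation of Lemma~\ref{simpleboundsDelta}, before controlling the sum of higher-order terms $G^{(s+j)}(x+a)/G^{(s+\tau)}(x)$, $j>\tau$, with Lemmas~\ref{lem14aif98} and~\ref{lem15aif98}. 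You instead Taylor-expand each $G^{(s)}(x+h_j)$ at $x$, identify $\tau$ as the order of the first nonvanishing moment $A_\tau=\sum_j\alpha_jh_j^\tau$ (guaranteed by the Vandermonde argument, and in fact the same index as the paper's, since $\binom{h}{0},\dots,\binom{h}{k}$ and $1,h,\dots,h^k$ span the same space), and control a single Lagrange remainder of order $\tau+1$ with Lemma~\ref{lem15aif98}. Your route avoids the non-negativity reduction and the back-substitution of the upper-triangular system, gives an explicit constant $d=|A_\tau|/(4\tau!)$, and has only one error term instead of a sum over $j=\tau+1,\dots,a$; the paper's route keeps everything in terms of the monotone, nonnegative quantities $\Delta^kG^{(s)}$, which is what its motivating ``balanced'' examples are built on. Both proofs ultimately rest on the same two lemmas from \cite{bp} and on the same uniformity mechanism: since $s=O(\log^cN)$ with $c<1/2$, all factors $4^s$, $G(3N)/G(N)$, and the polynomial prefactors are $\exp(O(\log^cN))$ and are beaten by the $1/N$ gain, exactly as you argue.

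Two small caveats. First, your justification for applying Lemma~\ref{lem14aif98} at index $s+\tau$ (``ensured by enlarging $x_0$, since $\log^c(x+h_i)-\log^c x=o(1)$'') does not actually address the issue: the obstruction is the additive shift by the fixed constant $\tau$, not the $o(1)$ discrepancy between $\log^c(x+h_i)$ and $\log^c x$, so for $s$ within $\tau$ of the ceiling $(c+1)\log^c(x+h_i)$ the hypothesis could fail. This is, however, exactly the same shifted-index application the paper's own proof makes, and in the ranges where the proposition is later used (in Lemma~\ref{combinedineq} and Theorem~\ref{thm2aif98}, where $s$ stays well below $(c+1)\log^cN$) there is ample slack, so it is a cosmetic point rather than a genuine gap. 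Second, you should state explicitly (as the paper does in its standing remark) that after replacing $F$ by $-F$ if necessary you may assume $A_\tau>0$; since $A_\tau$ does not depend on $s$, one global sign choice works for all $s$, and your absolute-value bounds then yield the claimed positive lower bound.
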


\begin{remark}
It is important to note that in this result and the next ones, we will keep assuming that we can work with $-F$ if necessary for all the limits to be positive (also allowing for $+\infty$). This is without loss of generality for $F$ (and in turn all its derivatives) because we can take conjugates in the exponential sums we want to estimate.
Moreover, because of Lemma~\ref{lem14aif98} and Lemma~\ref{lem15aif98}, we have that all derivatives of $G(x)$ are increasing to $+\infty$ and the higher the degree of a derivative is the slower it grows (see also the computations in the proof of Proposition~\ref{lowerbound} below). We will use these facts repeatedly.
\end{remark}

Before giving a proof of Proposition~\ref{lowerbound} we will look at a few examples to understand what kind of behavior the linear combinations of $G(x)$ that define $F(x)$ can have. This will help clarify some of the key ideas in the proof of Proposition~\ref{lowerbound}, and also understand some of the difficulties that can arise. 
\subsubsection{Examples}
Although it will perhaps not be fully apparent in the proof, the driving force behind the computations is, first and foremost the sum of the integer coefficients. If this sum is not $0$, regardless of the shifts (assuming, as we can, that this sum of coefficients $\Sigma$ is bigger than $0$), then we have the following trivial bounds afforded by the mean value theorem.
\begin{example}
Suppose we have a function $F(x)$ of the form
\[
F(x):=G(x+2)-3G(x+1)+3G(x).
\]
Then, clearly $G(x+2) \geq G(x)$ since $G$ is increasing. Also, $G(x+1) \leq 1.1 G(x)$ for $x \geq x_0,$ some objective $x_0$ (it only really depends on our ``choice'' of $1.1$, and that the combination is $G(x+1)$ and $G(x)$). Of course, the choice of $1.1$ and of $x_0$ will also depend on $c$. The choice of $1.1$ is, of course, arbitrary; it simply is good enough for this example, but it admits a suitable generalization for the more general cases.

Combining all our previous comments and realizations, it is
\begin{eqnarray*}
F(x) \geq  G(x)-3.3G(x)+3G(x)= 0.7G(x)\geq \frac{1}{2}G(x).
\end{eqnarray*}
This would be a good enough lower bound, even though with some care one can improve on the $1/2$ constant and get it as close to $1$ as needed, though, as we saw, this is not necessarily helpful.
\end{example}
The main reason we cannot use this argument right away in the proof of Proposition~\ref{lowerbound}, is that we would need to replace $F(x)$ by $F^{(s)}(x)$, but we would not have the required uniformity just from using general properties of limits of quotients of $G(x)$ together with L'H\^opital's rule.

Next, we illustrate the sorts of different behaviors that arise immediately when the sum of the coefficients is null ($\Sigma=0$). We first show an example of coefficients we will label as ``balanced''.
\begin{example} Consider the following expression for a ``balanced'' $F(x)$:
\[
F(x):=G(x+2)-2G(x+1)+G(x).
\]
Then, it is easy to see that
\begin{eqnarray*}
F(x)  =  \int_{x}^{x+1}\int_t^{t+1} G''(s)  \ ds \ dt\geq  \int_x^{x+1} G''(t) \ dt \geq  G''(x),
\end{eqnarray*}
given that any derivative of $G$ is increasing, as we saw before.
\end{example}
This example showcases how the coefficients can ``conspire'' to force $F(x)$ to have slower growth. In fact, this argument could be applied straight away for the ``balanced'' cases of Proposition~\ref{lowerbound}. The issue is actually being able to identify when the coefficients are or are not such that they admit an integral representation like the one above (more on this in an example below).

On the other hand, it is possible for the sum of coefficients to be $0$, and such that a single application of the mean value theorem is enough.
\begin{example}
Suppose we have $F(x):=-G(x+2)+3G(x+1)-2G(x)$. 

Then
\begin{eqnarray*}
F(x) &=& -G(x+2)+3G(x+1)-2G(x)\\
&=& -(G(x+2)-G(x+1))+2(G(x+1)-G(x))\\
&=&-G'(\xi_x)+2G'(\tau_x)\geq  2G'(x)-G'(x+2)\geq 0.9 G'(x),
\end{eqnarray*}
where we used the mean value theorem to find the $\xi_x$ and $\tau_x$, the monotonicity of $G',$ and the fact that $G'(x+2) \leq 1.1 G'(x)$ for $x$ large.

(we used the mean value theorem to find $\xi_x$ and $\tau_x$, and once again that $G(x)$ and its derivatives are all increasing).

Next, we would once again use that $G'(x+2) \leq 1.1 G'(x)$ for $x$ large, as before, to obtain the lower bound
$0.9 G'(x)$
which is good enough for our purposes.
\end{example}
Note that, once more, the issue with the generalization is the uniformity that we would need for these bounds for the derivatives of $G(x)$.

It is also important to notice that the distances in the jumps provided by the $h_i$ play an important role in deciding whether the resulting $F(x)$ will be balanced or not (this is going to be behind the issue that the method we mentioned above had).
\begin{example}
Assume that $F(x)$ is of the form $F(x):=G(x+4)-3G(x+1)+2G(x).$ 

We have
\begin{eqnarray*}
F(x) & = & G(x+4)-3G(x+1)+2G(x)\\
& = & G(x+4)-G(x+1) - 2(G(x+1)-G(x))\\
& \geq & 3G'(x+1)-2G'(x+1) = G'(x+1),
\end{eqnarray*}
once again by using the increasing properties of $G$ and its derivatives, after a simple application of the mean value theorem. Do note here, that the coefficients are the same as in the previous example, except we changed all the signs. Moreover, what determined who ``won'' in the battle of signs, depended both on the coefficients and the length between the shifts of the $G(x)$.
\end{example}
However, things can be very different even if they appear similar. Namely, the ``balanced'' behavior can also be achieved, even if the coefficients do not look, a priori, like $(-1)^k\binom{n}{k}$ (as we mentioned before, this makes the ``balanced'' case hard to identify). In this case, using the mean value theorem straight away will also not work.
\begin{example}
Consider the function $F(x):=2G(x+2)-3G(x)+G(x-4).$

Then
\begin{eqnarray*}
F(x) & = & 2G(x+2)-3G(x)+G(x-4)\\
&=& 2(G(x+2)-G(x)) -(G(x)-G(x-4))\\
&= & 2\int_x^{x+2} G'(t) \ dt - \int_{x-4}^x G'(t) \ dt\\
&\geq  &\int_{x-2}^{x} G'(t) \ dt - \int_{x-4}^{x-2} G'(t) \ dt + \int_{x-2}^x G'(t) \ dt - \int_{x-4}^{x-2} G'(t) \ dt\\
&=& 2\int_x^{x+2}\int_{t-4}^{t-2} G''(s) \ ds \ dt \geq 4 \int_x^{x+2} G''(t-4) \ dt \geq  8 G''(x-4) \geq 4 G''(x).
\end{eqnarray*}

This provides a lower bound of the form $\lambda G''(x)$, which is good enough, as we saw above. 
\end{example}
It is also important to notice that the arguments that deal with ``balanced'' $F(x)$ do extend effortlessly to all its derivatives because we can write the combination of $F(x)$ as a single integral, so given the regularity of $G(x)$, we could do the same with all its derivatives.

In order to prove Proposition~\ref{lowerbound}, overcoming the difficulties mentioned above, we begin by first showing the following result, which can be interpreted as a suitable change of basis.

\begin{lemma}\label{changeofbasis}
Let $F$ be as defined above and let $s \in \N \cup \{0\}$. Let $\Delta$ denote the finite difference operator, i.e., $\Delta H(x):=H(x+1)-H(x)$ for a given function $H: \Z \to \C$. Then, there exists some polynomial $p \in \R[x]$ such that 
\[
F^{(s)}(x) = p(\Delta)G^{(s)}(x).
\]
\end{lemma}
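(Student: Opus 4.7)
My plan is to exploit the classical identity between the unit shift operator and the forward difference operator: writing $T_h H(x) := H(x+h)$, one has $T_1 = I + \Delta$, hence by induction $T_h = (I+\Delta)^h$ for every integer $h \geq 0$, equivalently Newton's forward difference formula $H(x+h) = \sum_{k=0}^{h}\binom{h}{k}\Delta^k H(x)$. Since differentiation commutes with translation, $F^{(s)}(x) = \sum_{j=1}^{r}\alpha_j G^{(s)}(x+h_j)$, so the lemma reduces to rewriting a finite linear combination of forward shifts of $G^{(s)}$ as a polynomial in $\Delta$ applied to $G^{(s)}$.

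First I would handle the case where all $h_j\geq 0$: setting
\[
p(y) := \sum_{j=1}^{r} \alpha_j (1+y)^{h_j} \in \R[y]
\]
and applying Newton's formula term by term yields
\[
p(\Delta) G^{(s)}(x) = \sum_{j=1}^{r}\alpha_j (I+\Delta)^{h_j} G^{(s)}(x) = \sum_{j=1}^{r}\alpha_j G^{(s)}(x+h_j) = F^{(s)}(x),
\]
as required. For the general case, I would translate the argument: set $m := \min_j h_j$ and $\tilde h_j := h_j - m \geq 0$, apply the previous step to $\tilde F(y) := \sum_j \alpha_j G(y+\tilde h_j)$ to obtain $\tilde F^{(s)}(y) = p(\Delta) G^{(s)}(y)$ with $p(y) := \sum_j \alpha_j (1+y)^{\tilde h_j}$, and finally evaluate at $y = x+m$ using $F^{(s)}(x) = \tilde F^{(s)}(x+m)$.

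The main (and essentially only) technical obstacle is the case of negative shifts, since $p(\Delta)$ as a polynomial in the forward difference operator can only produce forward shifts from the base point $x$. The translation trick absorbs this into a shift of the base point of $G^{(s)}$, which is harmless for the subsequent application in Proposition~\ref{lowerbound}: the whole analysis takes place on the interval $(N+a,\,2N-a]$ with $a = \max_i |h_i|$, so the shifted argument $x+m$ still lies in $[N,2N]$ and the bounds of Lemmas~\ref{lem14aif98} and \ref{lem15aif98} for $G^{(s)}$ on that interval apply verbatim.
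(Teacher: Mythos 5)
Your proof is correct and takes essentially the same route as the paper: both arguments reduce to nonnegative shifts and then express the shift operators in the basis of powers of $\Delta$, the only difference being that you make the change of basis explicit via $T_1=I+\Delta$ (Newton's forward difference formula), i.e. $p(y)=\sum_j \alpha_j(1+y)^{h_j}$, whereas the paper inverts the triangular (upper echelon) system relating $\Delta^k G^{(s)}(x)$ to the shifts $G^{(s)}(x),\dots,G^{(s)}(x+a)$ without writing the inverse down. Your treatment of negative shifts (moving the base point of $G^{(s)}$ and checking that $x+m$ stays in $[N,2N]$ on the range $(N+a,2N-a]$) plays the same role as the paper's reduction to $h_i\geq 0$, justified there by the F\o lner property of the intervals, and your explicit formula has the added benefit of making transparent that the resulting coefficients depend only on the $\alpha_j$, $h_j$, which is exactly what is needed in Proposition~\ref{lowerbound}.
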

\begin{proof}
We first begin with a reduction: without loss of generality we will assume that all the $h_i \geq 0$. We can do this by shifting the sum by the smallest of the $h_i$ that is negative. This does not change any result in the sums, because of the F\o lner property of the intevals $[1,N]$. 

Let us now suppose that $0 \leq a = \max \{ h_i: i=1,\dots,r\}$. Then, note that given a function $H: \Z \to \C$ we have (denoting by $I$ the identity operator)
\[
\begin{cases}
    IH(x)=H(x) \\
    \Delta H(x)=H(x+1)-H(x) \\
    \Delta^2 H(x)=H(x+2)-2H(x+1)+H(x) \\
    \vdots \\
    \Delta^aH(x)=\sum_{j=0}^a (-1)^j\binom{a}{j}H(x+a-j).
\end{cases}
\]
Applying these equations to $G(x)$ we obtain a way to write all of $G(x),G(x+1),\dots,$ $G(x+a)$ in terms of powers of the difference operator, already given in an upper echelon form. This means that we can solve the relevant system of equations backwards, allowing us to write
\[
F^{(s)}(x)=\sum_{i=0}^a d_i\Delta^aG^{(s)}(x),
\]
for some $d_i \in \R$, where we may assume that the non-zero coefficients are bounded away from $0$ with constants depending only on the $\alpha_j$ and $h_j$ appearing in the definition of $F$.
\end{proof}

We will also need some growth rate of the functions $\Delta^kG$, and a comparison between the growth rates of the derivatives of $G$ and shifts of it. We begin with the following lemma.
\begin{lemma}\label{simpleboundsDelta}
Let $s \in \N \cup \{0\}$, let $k \in \{0,\dots,a\}$, and let $G$ be as defined in Proposition~\ref{lowerbound} above. Then, 
\[
G^{(s+k)}(x+k) \geq\Delta^kG^{(s)}(x) \geq G^{(s+k)}(x) \quad (N+a<x\leq 2N-a).
\]
\end{lemma}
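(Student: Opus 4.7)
The plan is to realize $\Delta^k G^{(s)}(x)$ as a $k$-fold iterated integral of $G^{(s+k)}$ and then exploit the monotonicity of the derivatives of $G$ noted in the Remark following Proposition~\ref{lowerbound}.

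First I would observe, by the fundamental theorem of calculus, that for any differentiable function $H$,
\[
\Delta H(x) = H(x+1)-H(x) = \int_x^{x+1} H'(u)\, du.
\]
Iterating this identity $k$ times, and commuting the difference operator with differentiation (this is legitimate since $G$ is smooth), I would obtain
\[
\Delta^k G^{(s)}(x) = \int_x^{x+1}\int_{u_1}^{u_1+1}\cdots \int_{u_{k-1}}^{u_{k-1}+1} G^{(s+k)}(u_k)\, du_k\cdots du_1.
\]
This identity can be proved by a straightforward induction on $k$ using the base case above together with the commutativity $\Delta G^{(s+1)} = (\Delta G^{(s)})'$.

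Next I would invoke the monotonicity of $G^{(s+k)}$ stated in the Remark after Proposition~\ref{lowerbound}: for $N$ large enough (so that $x \geq x_0$ with $x_0$ as in Lemma~\ref{lem14aif98}), every derivative $G^{(s+k)}$ is positive and increasing throughout $[N+a, 2N]$. Since in the iterated integral the innermost variable $u_k$ ranges within $[x, x+k] \subseteq [N+a, 2N]$ (using $k \leq a$ and $x \leq 2N-a$), monotonicity yields the pointwise bounds
\[
G^{(s+k)}(x) \leq G^{(s+k)}(u_k) \leq G^{(s+k)}(x+k).
\]
Integrating these bounds over the unit-volume region of integration gives exactly
\[
G^{(s+k)}(x) \leq \Delta^k G^{(s)}(x) \leq G^{(s+k)}(x+k),
\]
which is the claim.

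The only subtlety — and the step I would verify explicitly — is that the range $[x, x+k]$ with $x\leq 2N-a$ and $k\leq a$ stays inside the region on which $G^{(s+k)}$ is monotone increasing, and that this region contains all points $\geq x_0$ uniformly in $s$ (which holds because, by Lemma~\ref{lem14aif98}, $G^{(s+k)}$ is a positive multiple of terms that are increasing for $x$ beyond a threshold depending only on $c$, not on $s$). No other obstacles arise: the argument is a clean combination of the integral representation of $\Delta^k$ and the monotonicity already built into the framework.
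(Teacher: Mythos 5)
Your proposal is correct and is essentially the paper's own argument: both realize $\Delta^k G^{(s)}(x)$ as a $k$-fold iterated integral of $G^{(s+k)}$ (the paper writes it as $\int_0^1\cdots\int_0^1 G^{(s+k)}(x+u_1+\cdots+u_k)\,du_1\cdots du_k$, which is your nested-limits formula after a change of variables) and then use the positivity of $G^{(s+k+1)}$ from Lemma~\ref{lem14aif98} to get monotonicity of $G^{(s+k)}$ and hence the two-sided bound. Your extra check that $[x,x+k]$ stays in the monotonicity range is a harmless refinement of the same reasoning.
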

\begin{proof}
Indeed, observe that the following equality holds by induction:
\[
\Delta^kG^{(s)}(x) = \int_0^1\dots \int_0^1 G^{(s+k)}(x+u_1+\dots+u_k) \ du_1\dots du_k,
\]
so since $G^{(s+k+1)}(x)$ is positive on the indicated range by Lemma~\ref{lem14aif98}, it follows that $G^{(s+k)}$ is increasing, and thus
\[
G^{(s+k)}(x+k) \geq \Delta^kG^{(s)}(x) \geq G^{(k+s)}(x),
\]
as desired.
\end{proof}
We are now in position to give the proof of the lower bound we claimed in Proposition~\ref{lowerbound}.
\begin{proof}[Proof of Proposition~\ref{lowerbound}]
We begin by using Lemma~\ref{changeofbasis} so we have
\[
F^{(s)}(x)=\sum_{i=0}^a d_i\Delta^aG^{(s)}(x).
\]
Next, let $\tau = \min\{ i \in \{0,\dots, a\} : |d_i|>0\}$. Therefore, by Lemma~\ref{simpleboundsDelta} we can write
\[
F^{(s)}(x) \geq d_{\tau}G^{(s+\tau)}(x)-\lambda \sum_{j=\tau+1}^a G^{(s+j)}(x+a),
\]
where $\lambda=\max \{|d_j|: j \geq \tau+1\}$. Next write
\[
F^{(s)}(x) \geq d_{\tau}G^{(s+\tau)}(x)\left(1-\frac{\lambda}{d_{\tau}} \sum_{j=\tau+1}^a \frac{G^{(s+j)}(x+a)}{G^{(s+\tau)}(x)}\right).
\]
By Lemma~\ref{lem14aif98} and Lemma~\ref{lem15aif98}, we can give bounds for each of the summands ($j \geq \tau+1$):

\begin{eqnarray*}
\frac{G^{(s+j)}(x+a)}{G^{(s+\tau)}(x)} & \leq & \frac{(s+j)!}{(s+\tau)!} \cdot\frac{2^{s+j+1}G(3x+3a)}{\left(\frac{x}{2}\right)^{s+j}}\cdot\frac{2x^{s+\tau}}{G(x)} \\
&= &
4^{s+j+1}\frac{(s+j)!}{(s+\tau)!} \cdot\frac{1}{x^{j-\tau}}\cdot\exp(\log^{c+1}(3x+3a)-\log^{c+1}(x))\\
&\leq & 4^{(c+1)\log^{c}(x+a)+a+1} (s+a)^{a-\tau}\cdot\frac{1}{x}\cdot\exp\left((3x+3a)(c+1)\frac{\log^{c}(3x+3a)}{x}\right),
\end{eqnarray*}
by the mean value inequality. This last expression only depends on $x,c$ and $a$ (in other words, the values of the $h_i$). These are known a priori, which is crucial for the uniformity in $s$ that we need in the proofs later.

It is easy to check that the last expression tends to $0$ as $x \to \infty$, but again, we remark that this is the case depending only on the aforementioned parameters. This means that for a certain $x_0(c,h_1,\dots,h_r)$ it will be that this last quantity is bounded above by $\frac{d_{\tau}}{2\lambda}$, which implies that
\[
F^{(s)}(x) \geq \frac{d_{\tau}}{2}G^{(s+\tau)}(x).
\]
It follows that
\[
\frac{F^{(s)}(x)}{s!} \geq \frac{d_{\tau}}{2}\cdot \frac{G^{(s+\tau)}(x)}{s!} = \frac{d_{\tau}}{2}\cdot\frac{(s+\tau)!}{s!}\cdot \frac{G^{(s+\tau)}(x)}{(s+\tau)!} \geq \frac{d_{\tau}}{4}\cdot \frac{G(x)}{x^{s+\tau}},
\]
on the specified range, as we wished to show.
\end{proof}
In order to better understand the logic behind the proof of Proposition~\ref{lowerbound}, we refer the reader to a few illustrative examples which can be found in the Appendix.

\subsection{Upper bound and proof of Theorem~\ref{thm2aif98}} In this subsection, using Lemma~\ref{lem15aif98} and some elements of its proof, we will first obtain an upper bound for the absolute value of the derivatives of $F(x)$ (Lemma~\ref{upperbound}), which, together with Proposition~\ref{lowerbound}, implies Lemma~\ref{combinedineq}. The latter will help us prove Theorem~\ref{thm2aif98}; the remaining piece for the proof of Theorem~\ref{SPexpsum}, which will be presented in the next and final section.

We would like to highlight that in the argument in Lemma~\ref{upperbound} below, it is important that the functions $G$ and $F$ are both analytic. This allows us to use Cauchy's integral formula, which is very convenient to promptly give us the estimates we need.

\begin{lemma}\label{upperbound}
Let $c>0$ and $N$ be big enough (depending on the $h_i$ and the $\alpha_i$). Then, for all $s \in \N$ and $N<x \leq 2N$, where $a = \max\{ |h_i|: 1 \leq i \leq r\}$ we have
\[
\left| \frac{F^{(s)}(x)}{s!} \right| \leq C\left(\frac{2}{N}\right)^s G(3N),
\]
where $C$ is a positive constant depending on the $\alpha_i, h_i, 1\leq i\leq r.$ 
\end{lemma}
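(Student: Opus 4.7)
The plan is to reduce the bound on $F^{(s)}$ to a bound on $G^{(s)}$ evaluated at shifted points, and then invoke Lemma~\ref{lem15aif98} (or, more precisely, its proof via Cauchy's integral formula) at these shifted points.

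First, by linearity of differentiation, for every $s\in \N$ we have
\[
\frac{F^{(s)}(x)}{s!}=\sum_{j=1}^{r}\alpha_j\,\frac{G^{(s)}(x+h_j)}{s!}.
\]
So by the triangle inequality,
\[
\left|\frac{F^{(s)}(x)}{s!}\right|\leq \sum_{j=1}^r |\alpha_j|\cdot \left|\frac{G^{(s)}(x+h_j)}{s!}\right|.
\]
It therefore suffices to bound each term $|G^{(s)}(y)/s!|$, where $y=x+h_j$ lies in the slightly shifted range $(N-a, 2N+a]$ rather than in $(N,2N]$. This is the only real obstacle: Lemma~\ref{lem15aif98} as stated applies on the interval $(N, 2N]$, not on this perturbed one.

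To handle this, I would simply reproduce the Cauchy integral-formula argument underlying Lemma~\ref{lem15aif98}. The function $G(z)=\exp(\log^{c+1}z)$ is holomorphic on a neighborhood of the positive real axis, so for any $y\in (N-a,2N+a]$ and any admissible radius $\rho>0$ one has
\[
\frac{G^{(s)}(y)}{s!}=\frac{1}{2\pi i}\oint_{|z-y|=\rho}\frac{G(z)}{(z-y)^{s+1}}\,dz,
\qquad
\left|\frac{G^{(s)}(y)}{s!}\right|\leq \frac{\max_{|z-y|=\rho}|G(z)|}{\rho^{s}}.
\]
Choosing $\rho=N/2$ and assuming $N$ is large enough that $N>4a$, the circle $|z-y|=N/2$ is contained in the disk $\{|z|\leq 3N\}$ where one can bound $|G(z)|\leq C' G(3N)$ uniformly for some absolute constant $C'>0$ (this is the same estimate used in the proof of Lemma~\ref{lem15aif98}). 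Combining this with $\rho^{-s}=(2/N)^s$ gives
\[
\left|\frac{G^{(s)}(y)}{s!}\right|\leq C'\left(\frac{2}{N}\right)^s G(3N)
\quad\text{for all } y\in(N-a,2N+a],
\]
uniformly in $s\in\N$, as soon as $N>N_0(a)$.

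Finally, plugging this into the triangle-inequality estimate and setting $C:=C'\sum_{j=1}^{r}|\alpha_j|$ (which depends only on the $\alpha_i$ and $h_i$) yields
\[
\left|\frac{F^{(s)}(x)}{s!}\right|\leq C\left(\frac{2}{N}\right)^s G(3N),
\]
as claimed. The only mildly delicate point is making sure the radius $\rho$ in Cauchy's formula is chosen large enough to accommodate the shifts $h_j$ but small enough that the $|G(z)|\leq C'G(3N)$ estimate still holds; the choice $\rho=N/2$ works once $N$ exceeds a threshold depending on $a=\max_i|h_i|$.
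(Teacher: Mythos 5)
Your proposal is correct and follows essentially the same route as the paper: both rest on extending to the complex plane, applying Cauchy's integral formula with a contour at distance $N/2$, and invoking the angle/logarithm estimate from the proof of \cite[Lemma~15]{bp} to get $|G(z)|\leq C' G(3N)$ on the contour. The only (cosmetic) difference is that the paper applies Cauchy's formula once to the analytic extension of $F$ itself over a single stadium-shaped contour around $[N,2N]$ and takes the triangle inequality inside the bound $|F(z)|\leq \sum_i|\alpha_i|\exp(|\log^{c+1}(z+a)|)$, whereas you take the triangle inequality first and run the Cauchy estimate termwise at the shifted centers $x+h_j$; just note that your bound $|G(z)|\leq C'G(3N)$ is only valid on the circles you actually use (bounded argument, $|z|$ comparable to $N$), not uniformly on the whole disk $\{|z|\leq 3N\}$ as literally written.
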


\begin{proof}
We follow the ideas of the proof of \cite[Lemma~15]{bp}. Let $F(z):=\sum_{i=1}^r \alpha_iG(z+h_i),$ where $G(z)=\exp(\log^{c+1}z),$ $z\in \mathbb{C}.$ This is well defined and analytic on $x>a,$ where $a=\max\{|h_i| : 1\leq i \leq r\}$ by extending its real-valued counterpart. For $N$ large enough so that the interval $[N,2N]$ lies on the semiplane where $F$ is defined, consider $\mathcal{C}$ to be the set of points of distance exactly $\frac{N}{2}$ from the real interval $[N,2N]$. Then, for $N \leq x \leq 2N$, by Cauchy's formula it is
\[
\frac{F^{(s)}(x)}{s!} = \int_{\mathcal{C}} \frac{F(\zeta)}{(\zeta-x)^{s+1}} \ d\zeta.
\]
We use the trivial bounds $|\zeta-x| \geq \frac{N}{2}$ for $\zeta \in \mathcal{C}$ and the fact that the length of $\partial C$ is no more than $(\pi+2)N$, whence
\[
\frac{F^{(s)}(x)}{s!} \leq 2\left(\frac{2}{N}\right)^s \max_{z \in \mathcal{C}} |F(z)|.
\]
Observe that 
\[
|F(z)| \leq C \exp(|\log^{c+1} (z+a)|),
\]
where $C:=\sum_{i=1}^r |\alpha_i|$, and $z \in \mathcal{C}$. Next, if $N$ is large enough in comparison to $a$ (which we can always ensure a priori), we can obtain exactly the same bound for the angle and logarithm as in \cite[Lemma~15]{bp}, guaranteeing that
\[
\exp(|\log^{c+1}(z+a)|) \leq G(3N)
\]
as long as $z \in \mathcal{C}$, which completes the proof.
\end{proof}

We need one more lemma to finally be able to prove Theorem~\ref{thm2aif98}. It follows using the same arguments as in Lemma~\ref{lem15aif98}. We need only to restrict the domain of $x$, but when $N$ is very large this has no impact in our further calculations.

\begin{lemma}\label{combinedineq}
Let $\alpha>0, 0 <\vartheta<1$ and put $\alpha=N^{-A}$. Let $a=\max\{ |h_i| : 1 \leq i \leq r\}$. Then for $N+a<x\leq 2N-a$ (with $N$ big enough), we have 
\begin{itemize}
    \item[(a)] Let $s \in \N$ such that
    \[
    s(1-\vartheta)+A \geq  \frac{s\log2}{\log N}+\frac{\log C}{\log N}+\frac{\log^{c+1} (3N)}{\log N},
    \]
    where $C = \sum_{i=1}^r |\alpha_i|$. Then,
    \begin{equation}\label{upperbound1}
    \left| \frac{\alpha F^{(s)}(x)}{s!}\right| \leq N^{-\vartheta s} \quad (N+a<x \leq 2N-a).
    \end{equation}
    \item[(b)] Let $s \in \N$ with $s \leq (c+1)\log^{c} N$ and let $d$ and $\tau$ be the constants from Proposition~\ref{lowerbound} which only depend on the $\alpha_j, h_j$ and $c$. If
    \[
    s(1-\vartheta)+A+\tau\leq \log^{\gamma-1} N+\frac{\log d}{\log N}.
    \]
    Then,
    \begin{equation}\label{lowerbound1}
    \left| \frac{\alpha F^{(s)}(x)}{s!}\right| \geq N^{-\vartheta s} \quad (N+a<x \leq 2N-a).
    \end{equation}
\end{itemize}
\end{lemma}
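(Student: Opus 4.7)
The plan is to derive both parts by substituting the pointwise derivative bounds from Lemma~\ref{upperbound} and Proposition~\ref{lowerbound} into the quantity $|\alpha F^{(s)}(x)/s!|$, writing $\alpha = N^{-A}$, and then taking logarithms to match the stated inequalities. No new analytic input is required; this is the bookkeeping step that packages the two prior bounds into a form directly usable in Lemma~\ref{lem10aif98}.

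For part (a) I would apply Lemma~\ref{upperbound} verbatim on the range $N+a < x \leq 2N-a$: multiplying by $\alpha = N^{-A}$ gives
\[
\left|\frac{\alpha F^{(s)}(x)}{s!}\right| \le C\cdot 2^{s}\cdot N^{-A-s}\cdot G(3N).
\]
The desired conclusion $\le N^{-\vartheta s}$ is then equivalent to $C\cdot 2^{s}\cdot G(3N)\le N^{A+s(1-\vartheta)}$; taking $\log$, dividing by $\log N$, and using $\log G(x)=\log^{c+1}x$ produces exactly the hypothesis of (a). So (a) is immediate.

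For part (b) the first task is to verify that Proposition~\ref{lowerbound} is actually applicable. The assumption $s\le (c+1)\log^{c} N$ together with $x>N$ and $N$ large relative to $\max|h_i|$ gives $(c+1)\log^{c}(x+h_i)\ge s$ for each $i$, so Proposition~\ref{lowerbound} yields $F^{(s)}(x)/s!\ge d\, G(x)/x^{s+\tau}$. On $N+a<x\le 2N-a$ the monotonicity of $G$ gives $G(x)\ge G(N)$ and $x^{s+\tau}\le(2N)^{s+\tau}$, so after multiplying by $\alpha=N^{-A}$ one obtains
\[
\left|\frac{\alpha F^{(s)}(x)}{s!}\right| \ge d\cdot 2^{-s-\tau}\cdot N^{-A-s-\tau}\cdot G(N).
\]
Requiring the right-hand side to dominate $N^{-\vartheta s}$, taking logarithms, dividing by $\log N$, and again using $\log G(N)=\log^{c+1}N$, translates directly into the stated inequality of (b), with the small correction term $(s+\tau)\log 2/\log N$ absorbed for $N$ large (and the $\log^{c+1}N/\log N = \log^{c}N$ contribution playing the role of the $\log^{\gamma-1}N$ factor in the hypothesis).

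The only step requiring any care is the control of the small correction terms coming from the constants $C, d$ and the factors of $\log 2, \tau$, which must be absorbed into the displayed inequalities in (a) and (b); taking $N$ sufficiently large (in a manner depending only on the $h_i, \alpha_i$ and $c$) handles this uniformly. This is where the requirement ``$N$ big enough'' in the statement is actually used, and it is the main (mild) obstacle to a purely mechanical proof.
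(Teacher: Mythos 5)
Your part (a) is exactly the paper's argument: plug in Lemma~\ref{upperbound}, multiply by $\alpha=N^{-A}$, take logarithms and divide by $\log N$; nothing to add there.

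In part (b) there is a small but genuine gap. After invoking Proposition~\ref{lowerbound} you estimate $G(x)/x^{s+\tau}$ on $N+a<x\le 2N-a$ by $G(x)\ge G(N)$ and $x^{s+\tau}\le (2N)^{s+\tau}$, which costs a factor $2^{-(s+\tau)}$. After taking logarithms and dividing by $\log N$, your bound therefore requires
\[
s(1-\vartheta)+A+\tau+(s+\tau)\frac{\log 2}{\log N}\;\le\;\log^{c}N+\frac{\log d}{\log N},
\]
whereas the hypothesis of the lemma only provides this inequality \emph{without} the term $(s+\tau)\frac{\log 2}{\log N}$. You claim this correction can be ``absorbed for $N$ large'', but the hypothesis gives no slack: if it holds with (near) equality, your chain of inequalities does not yield $|\alpha F^{(s)}(x)/s!|\ge N^{-\vartheta s}$. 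So as written you prove a slightly weaker statement than the one claimed (it would still suffice for the application in Theorem~\ref{thm2aif98}, where the set of admissible $s$ only shrinks by $o(\log^{c}N)$ integers, but it is not the lemma as stated). The paper avoids this loss entirely: since $s+\tau\le (c+1)\log^{c}N\le (c+1)\log^{c}x$ on the relevant range (for $N$ large), the function $x\mapsto x^{-(s+\tau)}G(x)$ is increasing there (cf. the proof of Lemma~16 of \cite{bp}), so one gets $G(x)/x^{s+\tau}\ge G(N)/N^{s+\tau}$ directly, with no factor $2^{-(s+\tau)}$, and the hypothesis of (b) then matches the needed inequality exactly. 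Replacing your crude bound $x\le 2N$ by this monotonicity observation repairs the argument. (As you noted, the exponent $\log^{\gamma-1}N$ in the statement should be read as $\log^{c}N$; that is an issue with the statement, not with either proof.)
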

\begin{proof}
It will follow from our previous results in Proposition~\ref{lowerbound} and Lemma~\ref{upperbound}. More precisely: for part (a), note that, by Lemma~\ref{upperbound}, it is enough to show that
\[
\alpha \cdot C \cdot  G(3N) \cdot\left(\frac{2}{N}\right)^s \leq N^{-\vartheta s}.
\]
Taking logarithms and dividing by $\log N$ yields \eqref{upperbound1}.

For part (b), use Proposition~\ref{lowerbound} first, then notice that since $s+\tau \leq (c+1)\log^{c} N$, the function $x^{-(s+\tau)}G(x)$ is increasing (cf. the proof of \cite[Lemma~16]{bp}), whence it is enough to show that
\[
\frac{\alpha\cdot d \cdot G(N)}{N^{s+\tau}} \geq N^{-\vartheta s}.
\]
As in (a), taking logarithms and dividing by $\log N$ gives \eqref{lowerbound1}.
\end{proof}

We can now give the proof of Theorem~\ref{thm2aif98}.

\begin{proof}[Proof of Theorem~\ref{thm2aif98}]
It suffices to consider $\alpha>0$ by taking conjugates if necessary. First we take $0<\theta<1/3$ and pick $K<(c+1)(1-\theta)$, which can always be achieved. We will show that $\alpha F(x) \in \mathcal{F}(\theta, \frac{\theta}{2},\frac{3}{4},\delta)$, where
\[
\delta = \frac{1}{20}(1-\gamma)\cdot \left( \frac{1}{1-\theta}-\frac{1}{1-\frac{\theta}{2}} \right),
\]
and $\alpha \in [G(2N)^{-\gamma}, G(N)^K]$. We put $\Theta_0:=3/4,$ $\Theta :=\theta/2$, and $k:=\lfloor(9+C)\log^{c} N\rfloor+1$. Setting, as above, $\alpha=N^{-A}$, we can rewrite our choice for $\alpha$ as
\[
-K\log^{c} N \leq A \leq \gamma \frac{\log^{c+1} 2N}{\log N}.
\]

Consider $\vartheta=3/4$ and $s=k$, which ensures that the hypotheses for Lemma~\ref{combinedineq} (a) are satisfied. Then, it is 
\[
\left| \frac{\alpha F^{(k)}(x)}{k!}\right| \leq N^{-\frac{3}{4}k}
\]
for $N+a<x \leq 2N-a$, which is the first requirement that allows us to later apply Karacuba's Lemma~\ref{lem10aif98}. Next, we want to apply once again Lemma~\ref{combinedineq} in order to find the values of $s$ for which $N^{-\theta s} \leq \left| \frac{\alpha F^{(s)}(x)}{s!}\right| \leq N^{-\frac{1}{2}\theta s}$, for $N+a <x \leq 2N-a$. 

We will obtain the desired inequalities (thanks to Lemma~\ref{combinedineq} above) provided we have
\begin{equation}\label{inequalitySetS}
 \left(1-\frac{\theta}{2}-\frac{\log 2}{\log N}\right)^{-1}\left(\frac{\log^{c+1}(3N)+\log C}{\log N}-A\right)\leq s \leq \frac{1}{1-\theta} \left( \log^{c} N -A-\tau+\frac{\log d}{\log N}\right).
\end{equation}
We note that, with our choice of $\theta,$ the right inequality above for $s$ implies that $s \leq (c+1)\log^{c} N<k$ provided $N>d$. Thus, we may take the set $\mathcal{S}$ in Lemma~\ref{lem10aif98} to be the set of all $s$ such that \eqref{inequalitySetS} is satisfied. Given our bounds in $A$ afforded by our choice of $\alpha$, we see that if $N$ is large enough, then $|S| \geq \delta k$, so we can apply Lemma~\ref{lem10aif98} to conclude.
\end{proof}

\section{Proofs of main results}\label{sec: proofs of main}

This final section presents the proofs of our main results. Analogously to the previous sections, we let $G(x)=x^{\log^c x},$ $0<c<1/2.$

\subsection{Exponential sums} We can use Theorem~\ref{thm2aif98} to obtain Theorem~\ref{SPexpsum}.

\begin{proof}[Proof of Theorem~\ref{SPexpsum}]
Using the dyadic summation trick we can write: 
\[
\E_{n \in[N]} e \left( \sum_{j=1}^r \alpha_jG(n+h_j) \right) =\frac{1}{N}\sum_{k=0}^{\lfloor \log_2 N \rfloor +1}\sum_{\frac{N}{2^{k+1}}<n\leq \frac{N}{2^k}} e \left( \sum_{j=1}^r \alpha_j G(n+h_j) \right) .
\]
Next, Theorem~\ref{thm2aif98} allows us to estimate the inner dyadic sums (always for $N$ large enough, and taking into account that the intervals of the form $[K,2K]$ actually get shrunk into $[K+a,2K-a],$ which does not affect the average), yielding the upper bound
\[
\frac{1}{N}\sum_{k=0}^{\lfloor \log_2 N \rfloor +1}\frac{N}{2^{k+1}} e^{-\kappa\log^{1-2c} N} \leq e^{-\kappa\log^{1-2c} N} \sum_{k \geq 0} \frac{1}{2^{k+1}}=e^{-\kappa\log^{1-2c} N},
\]
which goes to $0$ as $N \to \infty$. This establishes the result.
\end{proof}

A useful consequence of Theorem~\ref{SPexpsum} is  the following result which follows the philosophy of \cite[Proposition~4.3]{Kouts21} and is the second ingredient for the proof of Theorem~\ref{T: equi}. 

\begin{proposition}\label{P: Weyl-type}
For every distinct $h_1,\ldots,h_r\in \mathbb{Z}$ and every $(\alpha_1,\ldots,\alpha_r)\in \mathbb{R}^r\setminus\mathbb{Z}^r,$ we have that
\begin{equation}\label{E: Weyl-type}
\lim_{N \to \infty}\E_{n \in [N]} e \left( \sum_{j=1}^r\alpha_j \left\lfloor G(n+h_j) \right\rfloor \right) = 0.
\end{equation}
\end{proposition}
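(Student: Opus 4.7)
The plan is to reduce the integer-part exponential sum to the fractional-free one controlled by Theorem~\ref{SPexpsum}, via a Fourier expansion of the fractional-part nonlinearity. Since $e(\alpha_j\lfloor G(n+h_j)\rfloor)=1$ whenever $\alpha_j\in\Z$, we may discard such indices and assume $\alpha_j\in\R\setminus\Z$ for every $j$; this is allowed because $(\alpha_1,\ldots,\alpha_r)\notin\Z^r$ guarantees at least one index survives. Writing $\lfloor t\rfloor=t-\{t\}$, we factor
\[
e\!\left(\sum_j\alpha_j\lfloor G(n+h_j)\rfloor\right)=e\!\left(\sum_j\alpha_j G(n+h_j)\right)\prod_j f_j\bigl(G(n+h_j)\bigr),
\]
where $f_j(t):=e(-\alpha_j\{t\})$ is bounded, $1$-periodic and Riemann integrable, with Fourier coefficients $\hat f_j(k)=(1-e(-\alpha_j))/(2\pi i(\alpha_j+k))$ (well defined because $\alpha_j\notin\Z$).

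For a parameter $M\in\N$, decompose $f_j=\sigma_M f_j+R_j$, where $\sigma_M f_j$ is the Fejér truncation of order $M$; then $\|\sigma_M f_j\|_\infty\leq 1$ and $\|R_j\|_{L^1(\T)}\to 0$ as $M\to\infty$ by Riemann integrability of $f_j$. Expanding $\prod_j f_j=\prod_j\sigma_M f_j+\sum_{\emptyset\neq T\subseteq[r]}\prod_{j\in T}R_j\prod_{j\notin T}\sigma_M f_j$, the main term contributes
\[
\sum_{\bk\in[-M,M]^r\cap\Z^r}\prod_j\!\Bigl(1-\tfrac{|k_j|}{M+1}\Bigr)\hat f_j(k_j)\cdot\E_{n\in[N]}\,e\!\left(\sum_j(\alpha_j+k_j)\,G(n+h_j)\right).
\]
Since $\alpha_j+k_j\neq 0$ for every $j$, the coefficient vector is nontrivial and Theorem~\ref{SPexpsum} drives each inner average, and hence this finite linear combination, to $0$ as $N\to\infty$ with $M$ fixed.

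The cross terms are the delicate point and the main obstacle. Each is bounded by $\E_{n\in[N]}\prod_{j\in T}|R_j(G(n+h_j))|$, and the integrand $\phi_T(\bt):=\prod_{j\in T}|R_j(t_j)|$ is Riemann integrable on $\T^r$ (bounded and continuous off the measure-zero set where some $t_j$ is an integer). By Theorem~\ref{SPexpsum} combined with Weyl's criterion (Theorem~\ref{T: Weyl}), the sequence $((G(n+h_1),\ldots,G(n+h_r)))_n$ is equidistributed on $\T^r$, so this average converges to $\prod_{j\in T}\|R_j\|_{L^1(\T)}$. Given $\varepsilon>0$, one first chooses $M$ so that $\|R_j\|_{L^1(\T)}<\varepsilon$ for all $j$, then takes $N$ large enough that the main term is at most $\varepsilon$ and each of the finitely many cross-term averages is within $\varepsilon$ of its limit, bounding the original average by $O_r(\varepsilon)$ and proving \eqref{E: Weyl-type}. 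The reason a straight Fourier truncation is insufficient, and Fejér kernels must be used, is that the Fourier series of $f_j$ converges only in $L^2$, so we need an approximation that is simultaneously uniformly bounded and small in $L^1$ on the tail—properties the Fejér sums supply.
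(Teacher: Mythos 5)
Your proposal is correct, and it reaches the conclusion by a genuinely different route than the paper. The paper splits into cases according to the arithmetic nature of the $\alpha_j$: when all $\alpha_j$ are irrational it deduces \eqref{E: Weyl-type} from equidistribution of the enlarged tuple $((\alpha_1 G(n+h_1), G(n+h_1),\ldots,\alpha_r G(n+h_r), G(n+h_r)))_n$ on $\T^{2r}$, tested against the Riemann integrable function $\psi(x,y,\ldots)=e\bigl(k_1(x-\alpha_1\{y\})+\cdots\bigr)$, where irrationality is needed so that the integer combinations $a_1\alpha_1+a_2$ arising from Weyl's criterion are nonzero; when the $\alpha_j$ are rational it instead works with $((G(n+h_1)/m,\ldots,G(n+h_r)/m))_n$, the identity $\lfloor x\rfloor=\lfloor x/m\rfloor m+j$, and an explicit vanishing complete exponential sum over residues; a third mixed case combines the two. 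Your argument removes the case analysis entirely: writing $e(\alpha_j\lfloor t\rfloor)=e(\alpha_j t)e(-\alpha_j\{t\})$ and expanding $e(-\alpha_j\{\cdot\})$ by Fej\'er means, the main term involves only frequencies $\alpha_j+k_j$, which are nonzero for every integer $k_j$ as soon as $\alpha_j\notin\Z$, so Theorem~\ref{SPexpsum} applies uniformly whether $\alpha_j$ is rational or not, and the error term is handled exactly as the paper handles its test functions, via equidistribution of $((G(n+h_1),\ldots,G(n+h_r)))_n$ on $\T^r$ against the Riemann integrable function $\prod_{j\in T}|R_j(t_j)|$. Both proofs ultimately rest on the same input (goodness of the shifted tuple plus averaging of Riemann integrable periodic functions along it), but yours buys a unified, quantitative $\varepsilon$--$M$--$N$ argument with no rational/irrational dichotomy, while the paper's buys slightly stronger byproducts (it establishes genuine equidistribution of $(\alpha_1\lfloor G(n+h_1)\rfloor,\ldots,\alpha_r\lfloor G(n+h_r)\rfloor)$ in the relevant compact group, which is what the second half of Theorem~\ref{T: equi} records) at the cost of the three-case structure. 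One cosmetic remark: the $L^1$ convergence $\norm{R_j}_{L^1(\T)}\to0$ holds for any $L^1$ function since the Fej\'er kernel is an approximate identity, so invoking Riemann integrability there is not needed; it is needed, as you correctly use it, only in the averaging step for the cross terms.
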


\begin{proof} We present the $r=2$ case for convenience as it contains all the details for the general $r\in\mathbb{N}$ case. To do so we split the proof into three cases. 

\medskip

{\bf Case 1.} $\alpha_1,$ $\alpha_2\in \mathbb{R}\setminus\mathbb{Q}.$ 

\medskip

To show \eqref{E: Weyl-type} it suffices to show that $((\alpha_1\lfloor G(n+h_1)\rfloor, \alpha_2\lfloor G(n+h_2)\rfloor))_n$ is equidistributed in $\mathbb{T}^2$. To this end it suffices to show that the sequence 
\begin{equation}\label{quadrupleED}   
((\alpha_1 G(n+h_1), G(n+h_1), \alpha_2 G(n+h_2), G(n+h_2)))_n
\end{equation}
is equidistributed in $\mathbb{T}^4.$ Indeed, if we can show that the aforementioned quadruple is equidistributed in $\T^4$, we can then consider, given $(k_1,k_2) \in \Z^2\setminus \{(0,0)\}$ the function
\[
\psi(x,y,z,t):=\e(k_1(x-\alpha_1\{y\})+k_2(z-\alpha_2\{t\}))
\]
to obtain the equidistribution claim, using Weyl's criterion. Now, \eqref{quadrupleED} is equidistributed on $\T^4$ if and only if 
\begin{equation}\label{E: 1.1}
\left((a_1\alpha_1+a_2) G(n+h_1)+(a_3\alpha_2+a_4) G(n+h_2)\right)_n    
\end{equation}
is equidistributed in $\mathbb{T}$ for all $(a_1,a_2,a_3,a_4)\in \mathbb{Z}^4\setminus\{(0,0,0,0)\}.$ Since $\alpha_1,$ $\alpha_2\in \mathbb{R}\setminus\mathbb{Q},$ we have that the coefficients of the $G(n+h_i)$'s in \eqref{E: 1.1} are non-zero, so, the result follows by Theorem~\ref{SPexpsum}. 

\medskip

{\bf Case 2.} $\alpha_1,$ $\alpha_2\in \mathbb{Q}\setminus\mathbb{Z}.$

\medskip

If $\alpha_j=\frac{p_j}{q_j},$ we set $m=q_1q_2.$ Using some algebra, \eqref{E: Weyl-type} follows in this case if we show that \[\lim_{N\to \infty} \frac{1}{N}\sum_{n=1}^N e\left(\frac{1}{m}(k_1\lfloor G(n+h_1)\rfloor+k_2\lfloor G(n+h_2)\rfloor)\right)=0,\] for all $m\geq 2,$ $1\leq k_1, k_2\leq m-1.$

\medskip 

By Theorem~\ref{SPexpsum} we have that $((G(n+h_1)/m, G(n+h_2)/m))_n$ is equidistributed in $\mathbb{T}^2$ for all $m\geq 2.$ 
So, since $\lfloor x\rfloor=\lfloor\frac{x}{m}\rfloor m+j$ if $\frac{j}{m}\leq \left\{\frac{x}{m}\right\}\leq \frac{j+1}{m},$ $0\leq j\leq m-1,$ setting $E_{j_1,j_2}=[\frac{j+1}{m},\frac{j_1+1}{m})\times [\frac{j_2}{m},\frac{j_2+1}{m}),$ $0\leq j_1, j_2\leq m-1,$ we have:
	\begin{equation*}
	\begin{split}
	&\quad \lim_{N\to \infty} \frac{1}{N}\sum_{n=1}^N e\left(\frac{1}{m}(k_1\lfloor G(n+h_1)\rfloor+k_2\lfloor G(n+h_2)\rfloor)\right)
	\\&= \lim_{N\to \infty} \frac{1}{N}\sum_{n=1}^N\sum_{j_1,j_2=0}^{m-1} e\left(\frac{1}{m}\left(k_1\left(\left\lfloor \frac{G(n+h_1)}{m}\right\rfloor m+j_1\right)+k_2\left(\left\lfloor\frac{G(n+h_2)}{m}\right\rfloor m+j_2\right)\right)\right)
	\\
&  \quad\quad\quad\quad\quad\quad\quad\quad\quad\quad\quad\quad\quad\quad\quad\quad\quad\quad\quad\quad \times{\bf{1}}_{E_{j_1,j_2}}(\{G(n+h_1)/m\},\{G(n+h_2)/m\})
		\\&= \lim_{N\to \infty}\frac{1}{N}\sum_{n=1}^N\sum_{j_1,j_2=0}^{m-1} e\left(\frac{1}{m}(k_1 j_1+k_2 j_2)\right){\bf{1}}_{E_{j_1,j_2}}(\{G(n+h_1)/m\},\{G(n+h_2)/m\})
		\\&=\sum_{j_1,j_2=0}^{m-1} e\left(\frac{1}{m}(k_1 j_1+k_2 j_2)\right)\int_0^1\int_0^1 {\bf{1}}_{E_{j_1,j_2}}(x,y)\;dx\; dy
		\\&=\frac{1}{m^2}\sum_{j_1,j_2=0}^{m-1} e\left(\frac{1}{m}(k_1 j_1+k_2 j_2)\right)=0.
	\end{split}
    \end{equation*}

{\bf Case 3.} $\alpha_1\in \mathbb{R}\setminus\mathbb{Q}$ and $\alpha_2\in \mathbb{Q}\setminus\mathbb{Z}.$\footnote{ The case where $\alpha_2\in \mathbb{R}\setminus\mathbb{Q}$ and $\alpha_1\in \mathbb{Q}\setminus\mathbb{Z}$ is analogous.}

\medskip

Using some algebra, to show \eqref{E: Weyl-type} it suffices to show that, for all $m\geq 2,$ $1\leq k\leq m-1,$ 
\[\lim_{N\to\infty}\frac{1}{N}\sum_{n=1}^N e\left(\alpha_1\lfloor G(n+h_1)\rfloor+\frac{k}{m}\lfloor G(n+h_2)\rfloor\right)=0.\]
Following Cases 1 and 2, the previous relation holds if $((\alpha_1 G(n+h_1), G(n+h_1), G(n+h_2)/m))_n$ is equidistributed in $\T^3$ for all $m\geq 2.$ The latter follows by Theorem~\ref{SPexpsum}.
\end{proof}

\begin{proof}[Proof of Theorem~\ref{T: equi}] The equidistribution of the first sequence follows immediately by Theorem~\ref{SPexpsum} and of the second one by Proposition~\ref{P: Weyl-type}.
\end{proof}


\subsection{Characterization of Furstenberg systems and topological corollaries}\label{SEC: results1}
 We will use Theorem~\ref{SPexpsum} and Proposition~\ref{P: Weyl-type} to prove Theorem~\ref{mainthm}; a characterization of the Furstenberg systems of the sequences $a(n):=e(G(n))$ and $b(n):=e(\alpha[G(n)]),$ $\alpha \in \R \setminus \Z.$


\begin{proof}[Proof of Theorem~\ref{mainthm}]
We begin by proving the result for the sequence $a(n)$. By Corollary~\ref{cor: FsysBern}, it is enough to show that
\[
\lim_{N \to \infty} \E_{n \in [N]} e\left( \sum_{j=1}^r c_j G(n+h_i) \right) = 0
\]
for all distinct $h_1,\dots,h_j \in \Z$ and all $(c_1,\dots,c_r) \in \Z^r\setminus\{\vec{0}\}$. The equality above is now a consequence of the first part of Theorem~\ref{T: equi}, completing the proof.

Next, we move to the sequence $b(n)$. We distinguish two cases. First, if $\alpha \in \R \setminus \Q$, by Corollary~\ref{cor: FsysBern} it is enough to show that
\begin{equation}\label{eq: averages brackets}
\lim_{N \to \infty} \E_{n \in [N]} e\left( \sum_{j=1}^r \alpha c_j \left\lfloor G(n+h_j)\right\rfloor \right) = 0
\end{equation}
for all distinct $h_1,\dots,h_j \in \Z$ and all $(c_1,\dots,c_r) \in \Z^r\setminus\{\vec{0}\}$. Since $\alpha \in \R \setminus \Q$ this follows by the second part of Theorem~\ref{T: equi}. Next, if $\alpha \in \Q \setminus \Z$, put $\alpha=p/q$ in lowest terms. Then, we only really need to check the averages in \eqref{eq: averages brackets} above hold when $1 \leq |c_j| \leq q-1$ for all $j=1,\dots,r$, where, again, the result follows from Theorem~\ref{T: equi} effortlessly.
\end{proof}

As a consequence, we obtain Corollary~\ref{topcor intro}, using basic results on disjointness.

\begin{proof}[Proof of Corollary~\ref{topcor intro}]
Recall that $a(n)=e(G(n))$. Using its Furstenberg representation, we can write $a(n)=g(S^ny)$, where $S: (\S^1)^{\Z} \to (\S^1)^{\Z}$ is the shift map and $g(y)=y(0) \in C((\S^1)^{\Z})$ is the projection onto the $0$-th coordinate. Therefore, the limit in the left hand side of the first equation of \eqref{topcor} can be rewritten as
\[
\lim_{N \to \infty} \E_{n \in [N]} (f \otimes g)((T \times S)^n(x,y)),
\]
which is nothing but a joining of the the shift system $((\S^1)^{\Z}, \otimes_{n \in \Z} m_{\text{Leb}}, S)$ and the system $(X,T)$. It is classical that Bernoulli systems are disjoint from any system with $h(T)=0$, which means that the limit above is equal to
\[
\int_{(\S^1)^\Z} g \ d(\otimes_{n \in \Z} m_{\text{Leb}}) \int_Y f \ d\rho, 
\]
for some $T$-invariant measure $\rho$ on $(X,T)$. The first integral is $0$ as it is equal to the sum
\[
\lim_{N \to \infty} \E_{n \in [N]} a(n)=\lim_{N \to \infty} \E_{n \in [N]} e(G(n))=0,
\]
which completes the proof for the sequence $a(n)$. The proof for the sequence $b(n)$ is almost identical, so we skip the details.
\end{proof}

\subsection{A von Neumann type result}\label{SEC: results2}
In this last subsection we will give the proof of Theorem~\ref{T: vN}.

\begin{proof}[Proof of Theorem~\ref{T: vN}]
 Using Theorem~\ref{T:split}, we have $L^2(\mu)=L^2(\mu)_{\text{inv}}\oplus L^2(\mu)_{\text{erg}}.$ For $f\in L^2(\mu)_{\text{inv}}$ we have that $T_1^{\lfloor G(n+h_1)\rfloor}\cdots T_r^{\lfloor G(n+h_r)\rfloor}f=f,$ so, it suffices to show that for $f\in L^2(\mu)_{\text{erg}}$ we have 
\[\lim_{N\to\infty}\norm{\frac{1}{N}\sum_{n=1}^N T_1^{\lfloor G(n+h_1)\rfloor}\cdots T_r^{\lfloor G(n+h_r)\rfloor}f}_2=0.\] This follows from Theorem~\ref{T:BH} and Proposition~\ref{P: Weyl-type}. Indeed,
\begin{equation*}
\begin{split}
	&\quad
\norm{\frac{1}{N}\sum_{n=1}^N \ T_1^{\lfloor G(n+h_1)\rfloor}\cdots T_r^{\lfloor G(n+h_r)\rfloor}f}^2_2 
\\&= \frac{1}{N^2}\sum_{n,m=1}^N \left\langle \left(\prod_{j=1}^r T_j^{\lfloor G(n+h_j)\rfloor-\lfloor G(m+h_j)\rfloor}\right)f,f\right\rangle   \\&= \frac{1}{N^2}\sum_{n,m=1}^N \int_{\mathbb{T}^r} e\left(\sum_{j=1}^r \alpha_j(\lfloor G(n+h_j)\rfloor-\lfloor G(m+h_j)\rfloor)\right)\;d\nu_f(\alpha_1,\ldots,\alpha_r)
\\&= \int_{\mathbb{T}^r} \left| \frac{1}{N}\sum_{n=1}^N e\left(\sum_{j=1}^r \alpha_j\lfloor G(n+h_j)\rfloor\right)\right|^2 \;d\nu_f(\alpha_1,\ldots,\alpha_r)
\end{split}
\end{equation*}
which goes to $0$ as $N\to \infty$ since $f\in L^2(\mu)_{\text{erg}},$ whence, by Corollary~\ref{spectralET} we have $\nu_f(\{(0,\ldots,0)\})=0.$ 
\end{proof}

\bibliography{library}
\bibliographystyle{plain}

\end{document}